\numberwithin{equation}{section}
\theoremstyle{plain}
\newtheorem{theorem}{Theorem}
\newtheorem*{theorem*}{Theorem}
\newtheorem{corollary}[theorem]{Corollary}
\newtheorem{lemma}[theorem]{Lemma}
\newtheorem{remark}[theorem]{Remark}
\newtheorem*{proposition*}{Proposition}
\newtheorem*{claim*}{Claim}
\newtheorem*{lemma*}{Lemma}
\newcommand{\E}{\mathbb{E}}
\newcommand{\PR}{\mathbb{P}}
\newcommand{\eps}{\epsilon}
\newcommand{\poi}{\mathcal{P}}
\newcommand{\R}{\mathbb{R}}
\newcommand{\Z}{\mathbb{Z}}
\newcommand{\I}{\mathbb{I}}
\newcommand{\del}{\delta}
\newcommand{\Del}{\Delta}
\begin{document}

\title[The largest component in the Bohman-Frieze process]{On the largest component in the subcritical regime of the Bohman-Frieze process}

\author{Sanchayan Sen}
\address
{Courant Institute of Mathematical Sciences\newline
\indent New York University\newline
\indent 251 Mercer Street\newline
\indent New York, NY-10012\newline
\indent United States of America}
\email{
sen@cims.nyu.edu}

\date{4 April 2013. \textit{Revised:} 9 May 2013}
\subjclass[2000]{60C05, 05C80.} 
\keywords{Random graphs, Bohman-Frieze process, bounded size rules, branching process.}

\begin{abstract}
Kang, Perkins and Spencer \cite{kang} showed that the size of the largest component
of the Bohman-Frieze process at a fixed time $t$ smaller than $t_c$,
the critical time for the process is $L_1(t)=\Omega(\log n/(t_c-t)^2)$ with high probability.
They also conjectured that
this is the correct order, that is $L_1(t)=O(\log n/(t_c-t)^2)$ with high probability for fixed $t$ smaller
than $t_c$. Using a different
approach, Bhamidi, Budhiraja and Wang \cite{bhamidi} showed that $L_1(t_n)=O((\log n)^4/(t_c-t_n)^2)$ with
high probability
for $t_n\leq t_c-n^{-\gamma}$ where $\gamma\in(0,1/4)$. In this paper, we improve the result in
\cite{bhamidi} by showing that
for any fixed $\lambda>0$, $L_1(t_n)=O(\log n/(t_c-t_n)^2)$
with high probability for $t_n\leq t_c-\lambda n^{-1/3}$.
In particular, this settles the conjecture in \cite{kang}. We also prove some generalizations
for general bounded size rules.
\end{abstract}

\maketitle

\section{Introduction}\label{sec:intro}
Initiated by a question of Dimitris Achlioptas, the study of
modified Erd\H{o}s--R\'{e}nyi processes (called Achlioptas processes)
has grown into a large area of research in the past decade.
At each step of an Achlioptas process, two randomly chosen edges are presented and
one of these edges is added to the current random graph according to some selection rule.
The behavior of the random graph process (e.g. point of phase transition, appearance
of Hamiltonian cycles) depends on the selection rule. The Erd\H{o}s--R\'{e}nyi
process, for example, is an Achlioptas process where the first edge chosen is always
added to the random graph. An account of the literature on properties of several Achlioptas
processes can be found in
\cite{3beveridge, bhamidi, bhamidi2, bhamidi3, 6bohman, 7bohman, 9bohman, janson, spencer, kang, 25KLS, riordan}
and the references therein.

One such Achlioptas process, called the Bohman-Frieze process has received a great deal of attention
and will be of particular interest to us.
We first describe a continuous time version of the Bohman-Frieze process.
Consider the complete graph $G_n=(V_n,E_n)$ on the vertex set $V_n=\{1,\hdots,n\}$.
Consider independent Poisson processes $\poi_{e}$ indexed by $e=(e_1,e_2)\in E_n\times E_n$
each having rate $2/n^3$. Let $\cup_{e\in E_n\times E_n}\poi_{e}=\{u_1<u_2<\hdots\}$.
Then the dynamics of the continuous time Bohman-Frieze process ($BF_n(\cdot)$) are given as
follows.\\
\phantom{m}\hskip15pt $BF_n(u)$  is the empty graph on $V_n$  for $0\leq u<u_1$.\\
\phantom{m}\hskip15pt If the Poisson process $\poi_{e}$ has a point at $u_i$ where $e=(e_1,e_2)$ and
the endpoints of $e_1$\\
\phantom{m}\hskip15pt are isolated vertices in $BF_n(u_i-)$,
set $BF_n(u)=BF_n(u_i-)\cup e_1\text{ for }u\in[u_i,u_{i+1})$.\\
\phantom{m}\hskip15pt Otherwise, set $BF_n(u)=BF_n(u_i-)\cup e_2\text{ for }u\in[u_i,u_{i+1})$.

The expected number of edges at $u=1$ is
$$\frac{2}{n^3}\times\dbinom{n}{2}^2\approx\frac{n}{2},$$
thus the time normalization is the one corresponding to the Erd\H{o}s--R\'{e}nyi process.

The corresponding discrete time version ($DBF_n(\cdot)$) of the Bohman-Frieze process
evolves as follows.\\
\phantom{m}\hskip15pt $DBF_n(u)$  is the empty graph on $V_n$  for $0\leq u<2/n$.\\
\phantom{m}\hskip15pt At time $2(k+1)/n$, two edges $e_1$ and $e_2$ are selected uniformly (with replacement)\\
\phantom{m}\hskip15pt from $E_n$. If the endpoints of $e_1$ are isolated vertices in $DBF_n(2k/n)$, set\\
\phantom{m}\hskip15pt $DBF_n(u)=DBF_n(2k/n)\cup e_1\text{ for }u\in[2(k+1)/n,2(k+2)/n)$.\\
\phantom{m}\hskip15pt Otherwise, set $DBF_n(u)=DBF_n(2k/n)\cup e_2\text{ for }u\in[2(k+1)/n,2(k+2)/n)$.

We shall denote by $L^{BF}_1(t)$ (resp. $L^{DBF}_1(t)$), the size of the largest component
of $BF_n(t)$ (resp. $DBF_n(t)$). It is known that the phase transition for the discrete time Bohman-Frieze process
happens at time $t_c>1$. (It is easy to see that $t_c$ is also the critical time for the continuous time Bohman-Frieze process
and hence we shall refer to it as the critical time for the Bohman-Frieze process.) Theorem $4$ of \cite{kang} shows that for
any fixed $t\in(0,t_c)$,
$$\PR(L^{DBF}_1(t)\geq K\log n/(t_c-t)^2)\to 1$$
for some constant $K$ free of $t$. (Actually, the version of the Bohnman-Frieze process
studied in \cite{kang} is slightly different from $DBF_n(\cdot)$ since we are sampling the edges
at time $2(k+1)/n$ from $E_n$ whereas, in \cite{kang}, only the edges not present in the
graph at time $2k/n$ are allowed, but this difference is negligible, for details see \cite{spencer}.)
Kang, Perkins and Spencer conjecture that this is indeed the correct order, i.e.
$$\PR(L^{DBF}_1(t)\leq K'\log n/(t_c-t)^2)\to 1 $$
for any fixed $t\in(0,t_c)$ and some constant $K'$ free of $t$ (Conjecture $1$ in \cite{kang}).
Bhamidi, Budhiraja and Wang \cite{bhamidi, bhamidi2} independently show that for $\gamma\in(0,1/4)$,
there exists a constant $C=C(\gamma)$ such that
\begin{equation}\label{eqn:bhamidi}
\PR(L^{BF}_1(t_n)\geq C(\log n)^4/(t_c-t_n)^2)\to 0\text{ for }t_n\leq t_c-n^{-\gamma}
\end{equation}
by connecting the dynamics of $BF_n(\cdot)$ to an inhomogeneous random graph model.
In this work, we take the approach in \cite{bhamidi}
and go through a more careful analysis to prove Conjecture $1$ of \cite{kang} (Theorem \ref{thm:continuous time BF}
and Corollary \ref{cor:discrete time BF}). Our result is true for $t_n\leq t_c-\lambda n^{-1/3}$
(for any fixed $\lambda>0$) and thus closes the gap between the critical window
and the interval $0<t\leq t_c-n^{-\gamma}, \gamma<1/4$ where the bound in \cite{bhamidi} is valid.

Spencer and Wormald \cite{spencer}  introduced a generalization of the Bohman-Frieze process
called bounded size rules. For $K\geq 0$, we define
$$\Omega_K=\{1,\hdots,K,\omega\}.$$
The symbol $\omega$ represents ``numbers bigger than $K$'' (see \cite{spencer}).
For a graph $G$ on $V_n$ and $v\in V_n$, let $\mathfrak{C}(v,G)$ denote the size of the component
of $G$ containing $v$. Let
\begin{align}
c(v,G)=
\left\{
\begin{array}{l}
\mathfrak{C}(v,G), \text{ if }\mathfrak{C}(v,G)\leq K,\\
\omega,\text{ otherwise}.
\end{array}
\right.\nonumber
\end{align}
Fix $F\subset\Omega_K^4$.
Consider independent Poisson processes $\{\poi_e: e\in V_n^4\}$ of intensity $1/2n^3$ and let
$\cup_{e\in V_n^4}\poi_{e}=\{u_1<u_2<\hdots\}$.
Then the continuous time bounded size rule process ($BSR(\cdot)$)
associated with $F$ can be described in the following way.\\
\phantom{m}\hskip15pt Define $BSR_n(u)$  to be the empty graph on $V_n$  for $0\leq u<u_1$.\\
\phantom{m}\hskip15pt If the Poisson process $\poi_{e}$ has a point at $u_i$ and $e=(v_1,v_2,v_3,v_4)$,
let $c(e,BSR_n(u_i-)):=$\\
\phantom{m}\hskip15pt $(c(v_1,BSR_n(u_i-)),\hdots,c(v_4,BSR_n(u_i-)))$.\\
\phantom{m}\hskip15pt If $c(e,BSR_n(u_i-))\in F$, set
$BSR_n(u)=BSR_n(u_i-)\cup \{v_1,v_2\}\text{ for }u\in[u_i,u_{i+1})$.\\
\phantom{m}\hskip15pt Otherwise set $BSR_n(u)=BSR_n(u_i-)\cup \{v_3,v_4\}\text{ for }u\in[u_i,u_{i+1})$.

The discrete time version, $DBSR_n(\cdot)$ can be defined in the same way we defined
$DBF_n(\cdot)$. In \cite{bhamidi2}, it was shown that \eqref{eqn:bhamidi}
holds if we replace $L_1^{BF}(t_n)$ by $L_1^{BSR}(t_n)$, the size of
the largest component in $BSR(t_n)$. We improve this result in Theorem \ref{thm:continuous time BSR}
(see also Corollary \ref{cor:discrete time BSR}).

The precise statements of our results are given in Section \ref{sec:statements}.
The proofs for the Bohman-Frieze process and the bounded size rules are given in
Section \ref{sec:Proofs} and Section \ref{sec:proofs1} respectively.
Although bounded size rules generalize the Bohman-Frieze rule,
we choose to treat the more important Bohman-Frieze process separately
in Section \ref{sec:Proofs} to retain clarity in the (notationally simpler)
proof. In Section \ref{sec:proofs1}, we give an outline
of the proof for general bounded size rules highlighting only those parts of the
proof which are slightly different.

\section{Main Results}\label{sec:statements}
The following theorem is our main result.
\begin{theorem}\label{thm:continuous time BF} Let $t_c$ be the critical time for the Bohman-Frieze process.
Let $t=t(n)$ satisfy
$t\leq t_c-\lambda n^{-1/3}$ for a fixed $\lambda>0$. Let $L_1^{BF}(t)$ denote the size of the largest component
of $BF_n(t)$. Then there exists a positive constant $\overline{C}$ depending only on $\lambda$ such that
$$\PR(L_1^{BF}(t)>\overline{C}\log n/(t_c-t)^2)\stackrel{n\to\infty}{\longrightarrow}0.$$
\end{theorem}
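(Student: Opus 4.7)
The plan is to refine the strategy of \cite{bhamidi}, which couples the dynamics of $BF_n(\cdot)$ to an inhomogeneous random graph model, and to carry out a sharper tail analysis of the cluster exploration associated with that model. First I would split the edges produced by $BF_n(\cdot)$ into \emph{singleton edges} (those added when both endpoints of $e_1$ are isolated in $BF_n(u_i-)$) and \emph{doubleton edges} (all others). Singleton edges produce only isolated dimers, so the non-trivial component structure of $BF_n(t)$ is entirely generated by the doubleton edges. Conditional on the singleton-skeleton, the doubleton edges can be coupled with the edges of an inhomogeneous random graph on $V_n$ whose vertex types are $c(v,BF_n(t))\in\Omega_1=\{1,\omega\}$, together with the information of which singletons are currently paired.

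Next I would invoke the fluid-limit description of the Bohman--Frieze process: the proportion $x(t)$ of isolated vertices and the susceptibility $\bar{s}(t)=n^{-1}\sum_v\mathfrak{C}(v,BF_n(t))^2$ satisfy explicit ODEs, with $\bar{s}(t)\sim c_\ast/(t_c-t)$ as $t\uparrow t_c$. Wormald's differential equation method (or exponential-martingale concentration) provides uniform-in-$t$ fluctuations of order $\sqrt{\log n/n}$ around these fluid limits, which is acceptable as long as $t_c-t\gg n^{-1/2}$, a much weaker restriction than the hypothesis of the theorem.

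The core of the argument is to explore the component of a fixed vertex $v$ at time $t=t_n$ via breadth-first search and dominate it by a (two-type) branching process whose mean-offspring, computed from the inhomogeneous random graph above, has Perron eigenvalue $\rho(t)$ satisfying $1-\rho(t)\asymp t_c-t$. I would then prove a Cram\'er-type tail bound of the form
\[
\PR\bigl(\mathfrak{C}(v,BF_n(t))>s\bigr)\leq \exp\bigl(-c_1(t_c-t)^2 s\bigr)+o(n^{-2}),
\]
uniformly in $v\in V_n$ and in $s$ up to the target scale $\overline{C}\log n/(t_c-t)^2$. Taking $s=\overline{C}\log n/(t_c-t)^2$ with $\overline{C}=\overline{C}(\lambda)$ large enough and union-bounding over $v\in V_n$ then finishes the proof.

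The main obstacle is precisely this exponential tail bound: in \cite{bhamidi} the coupling error between $BF_n(\cdot)$ and the dominating branching process accumulates a multiplicative $(\log n)^3$ loss, which is what prevents one from reaching beyond $t_c-n^{-\gamma}$ with $\gamma<1/4$. To eliminate that loss I would track the exploration generation by generation, freshly re-randomising the unexplored doubleton edges at each step after conditioning on the evolving empirical type distribution, and use a Bennett/Bernstein-type inequality so that each generation size is controlled with exponential rate $\Omega((t_c-t)^2)$ rather than the weaker rate $(t_c-t)^2/\log^3 n$ coming from a naive Chernoff bound combined with the $O(\sqrt{\log n/n})$ fluctuations of the type distribution. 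The cutoff $t\leq t_c-\lambda n^{-1/3}$ is exactly the borderline at which the type fluctuations cease to be $o(t_c-t)$ in this exponential Cram\'er sense (rather than in the weaker law-of-large-numbers sense), and the constant $\overline{C}$ will depend on $\lambda$ only through this threshold.
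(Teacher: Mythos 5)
Your high-level framework --- coupling $BF_n(\cdot)$ to an inhomogeneous random graph, dominating the cluster exploration by a branching process whose Perron eigenvalue $\rho(t)$ satisfies $1-\rho(t)\asymp t_c-t$, obtaining a Cram\'er-type tail bound and union-bounding over starting vertices --- is indeed the paper's framework. However, your diagnosis of the bottleneck in \cite{bhamidi}, and hence your proposed fix, misses the decisive technical point.

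You conflate two separate defects of the bound in \cite{bhamidi}: the extraneous $(\log n)^3$ factor in the size estimate, and the restriction $\gamma<1/4$ on the admissible range $t\leq t_c-n^{-\gamma}$. You attribute both to ``coupling error accumulating a multiplicative $(\log n)^3$ loss'' and propose to remedy both by a generation-by-generation re-randomisation plus a Bennett/Bernstein inequality. In fact the range restriction stems from a \emph{bias}, not a concentration issue: the empirical rate functions deviate from their fluid limits by $O(\delta)$ with $\delta=n^{-\gamma}$, and one must control how this perturbs the operator norm $\rho_u$. The earlier analysis only gave a non-Lipschitz (roughly $\delta^{1/2}$) bound, forcing $\gamma<1/4$. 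The decisive step here is Lemma~\ref{lem:BF norm difference}, which establishes the genuine Lipschitz bound $|\rho_u(a_{n,\delta},b_{n,\delta},c_{n,\delta})-\rho_u(a,b,c)|\leq\beta_0\delta$; taking $\gamma\in(1/3,1/2)$ then makes the perturbation of $\rho$ be $o(t_c-t)$ on the whole range $t\leq t_c-\lambda n^{-1/3}$. Your sketch never touches this operator-norm estimate, and a sharper concentration inequality for generation sizes cannot substitute for it, since such inequalities do not control the systematic shift of the mean offspring kernel.

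Two further inaccuracies. Your claim that $t\leq t_c-\lambda n^{-1/3}$ is ``exactly the borderline at which the type fluctuations cease to be $o(t_c-t)$ in this exponential Cram\'er sense'' is wrong: the fluctuations of $x_n$ are $O(n^{-\gamma})$ with $\gamma$ arbitrarily close to $1/2$ (Lemma~\ref{lem:bhamidi}(i)), hence $o(t_c-t)$ whenever $t_c-t\gg n^{-1/2}$; the cutoff $n^{-1/3}$ is the boundary of the critical window, not a limitation of the method. And what actually removes the $(\log n)^3$ factor is the exponential-moment bound $\E\exp(\eta\Delta^2 G(x_0))<\infty$ for the total progeny, obtained via the explicit potential functions $\phi_\infty$ and $f_\ell$ and a backward recursion over generations (Lemmas~\ref{lem:2} and~\ref{lem:3}); this construction is delicate because the offspring law is unbounded and lives on the continuum type space $[0,t]\times W$, and your sketch gestures at but does not supply it.
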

An immediate consequence of Theorem \ref{thm:continuous time BF} is the analogous
result for the discrete time Bohman-Frieze process.
\begin{corollary}\label{cor:discrete time BF} Let $t_c$ be the critical time for the Bohman-Frieze process.
Let $t=t(n)$ satisfy
$t\leq t_c-\lambda n^{-1/3}$ for a fixed $\lambda>0$. Let $L_1^{DBF}(t)$ denote the size of the largest component
of $DBF_n(t)$. Then there exists a positive constant $\overline{\overline{C}}$ depending only on $\lambda$ such that
$$\PR(L_1^{DBF}(t)>\overline{\overline{C}}\log n/(t_c-t)^2)\stackrel{n\to\infty}{\longrightarrow}0.$$
\end{corollary}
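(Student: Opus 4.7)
The plan is to deduce this from Theorem \ref{thm:continuous time BF} via the standard Poissonization coupling between $BF_n(\cdot)$ and $DBF_n(\cdot)$. Let $u_1<u_2<\cdots$ denote the ordered jump times of the family of Poisson processes driving $BF_n(\cdot)$. By superposition, $(u_k)_{k\geq 1}$ is the jump sequence of a rate $\mu_n:=\binom{n}{2}^2\cdot 2/n^3=(n-1)^2/(2n)$ homogeneous Poisson process and, conditional on these times, the sequence of pairs $(e_1,e_2)\in E_n\times E_n$ used to update the graph is i.i.d.\ uniform. Since $DBF_n(\cdot)$ applies the same transition rule with the same conditional law of the edge-pairs, both processes can be realized on a common probability space so that
$$DBF_n(2k/n)=BF_n(u_k)\text{ for every }k\geq 1;$$
in particular, setting $k_t:=\lfloor nt/2\rfloor$, one has $L_1^{DBF}(t)=L_1^{BF}(u_{k_t})$.

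Next I would control $u_{k_t}$. Since $u_{k_t}$ is a sum of $k_t$ i.i.d.\ $\mathrm{Exp}(\mu_n)$ variables with $\E u_{k_t}=k_t/\mu_n=t+O(1/n)$ and $\mathrm{Var}(u_{k_t})=k_t/\mu_n^2=O(1/n)$, a standard exponential-tail inequality (e.g.\ Bernstein for sums of subexponentials) yields
$$\PR\bigl(u_{k_t}>t+n^{-1/2}\log n\bigr)\longrightarrow 0.$$
Set $s_n:=t+n^{-1/2}\log n$. Because $t\leq t_c-\lambda n^{-1/3}$ and $n^{-1/2}\log n=o(n^{-1/3})$, for all $n$ sufficiently large both $s_n\leq t_c-\tfrac{\lambda}{2}n^{-1/3}$ and $t_c-s_n\geq\tfrac12(t_c-t)$ hold, so Theorem \ref{thm:continuous time BF} applied at the deterministic time $s_n$ (with $\lambda/2$ in place of $\lambda$) supplies a constant $C^\ast=C^\ast(\lambda)$ for which $L_1^{BF}(s_n)\leq C^\ast\log n/(t_c-s_n)^2$ with probability tending to one.

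To conclude, observe that on the high-probability event $\{u_{k_t}\leq s_n\}$, monotonicity of the largest component in the edge set of $BF_n(\cdot)$ gives $L_1^{BF}(u_{k_t})\leq L_1^{BF}(s_n)$. Combining this with the coupling identity $L_1^{DBF}(t)=L_1^{BF}(u_{k_t})$ and the inequality $(t_c-s_n)^{-2}\leq 4(t_c-t)^{-2}$ yields
$$L_1^{DBF}(t)\leq 4C^\ast\log n/(t_c-t)^2$$
with probability tending to one, so one may take $\overline{\overline{C}}:=4C^\ast$. I do not anticipate any serious obstacle: the only delicate point is that Theorem \ref{thm:continuous time BF} must be invoked at the deterministic time $s_n$ rather than at the random time $u_{k_t}$, which is why the monotonicity reduction is needed; everything else is routine Poissonization bookkeeping.
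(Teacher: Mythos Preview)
Your proposal is correct and follows essentially the same route as the paper. Both arguments realize $DBF_n(2k/n)$ as $BF_n(u_k)$ via Poissonization, compare the discrete process at time $t$ to the continuous process at the shifted deterministic time $s_n=t+n^{-1/2}\log n$, and use monotonicity of $L_1^{BF}(\cdot)$; the paper phrases the coupling through the edge count $X_n(s)$ and bounds $\PR(X_n(s_n)<nt/2)$ via Berry--Ess\'een, whereas you phrase it through the jump times $u_{k_t}$ and invoke a Bernstein-type bound, but these are dual descriptions of the same event and either concentration estimate suffices.
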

The arguments used in the proof of Theorem \ref{thm:continuous time BF}
can be generalized to general bounded size rules.
Recall the definition of $\Omega_K$ from Section \ref{sec:intro}.
\begin{theorem}\label{thm:continuous time BSR}
Fix $K\geq 2$, $F\subset\Omega_K^4$ and consider the bounded size rule associated with $F$.
Let $t_c$ be the critical time for the process. Let $L_1^{BSR}(u)$ denote the size of the 
largest component of $BSR_n(u)$ for $u\geq 0$. Then, there exists
$\zeta=\zeta(F)>1/4$ having the following property: for every fixed $\zeta'\in(0,\zeta)$, there exists a constant
$\overline{C}$ depending only on $\zeta'$ and $F$ such that
$$\PR(L_1^{BSR}(t)>\overline{C}\log n/(t_c-t)^2)\stackrel{n\to\infty}{\longrightarrow}0$$
whenever $t=t(n)$ satisfies $t\leq t_c-n^{-\zeta'}$.
\end{theorem}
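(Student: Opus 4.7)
The plan is to mirror the proof of Theorem \ref{thm:continuous time BF} using a multitype branching-process approximation, with the number of types equal to $|\Omega_K|$. First I would introduce the density process $X^n_i(t) = n^{-1} |\{v \in V_n : c(v, BSR_n(t)) = i\}|$ for each $i \in \Omega_K$. Under the BSR dynamics $(X^n_i)_{i\in\Omega_K}$ is a density-dependent continuous-time Markov chain whose drift coefficients are polynomials in the $X^n_i$'s (determined by $F$), so by the differential equation method there is a unique fluid limit $(x_i(\cdot))_i$ satisfying a system of ODEs, and $\sup_{s\leq T}|X^n_i(s) - x_i(s)| = O(n^{-1/2}(\log n)^{1/2})$ with high probability on any compact time interval.

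To bound $L_1^{BSR}(t)$ I would explore the component of a fixed vertex $v\in V_n$ by breadth-first search in $BSR_n(t)$. At each step, the conditional law of the newly revealed neighbors (up to depletion corrections) can be dominated by the offspring law of a multitype Galton--Watson tree $\mathcal{Z}^n_t$ whose mean offspring matrix $M^n(t)$ is an explicit polynomial in $(X^n_i(t))_i$ obtained by conditioning on the BSR selection rule $F$. Passing to the fluid limit gives a branching process $\mathcal{Z}_t$ with mean matrix $M(t)$; by definition of $t_c$, the Perron eigenvalue $\rho(t)$ of $M(t)$ satisfies $\rho(t_c)=1$, and since $\rho$ is analytic and strictly increasing near $t_c$ we have $1 - \rho(t) \geq c(t_c - t)$ for some $c = c(F) > 0$ on $(t_c - \eps, t_c)$. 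A standard tail bound for subcritical multitype Galton--Watson processes with uniformly bounded offspring (via exponential tilting along the Perron eigenvector) gives
\[
\PR(|\mathcal{Z}_t| \geq m) \leq C \exp\bigl(-c'(1-\rho(t))^2 m\bigr).
\]
Choosing $m = \overline{C}\log n/(t_c - t)^2$ with $\overline{C}$ large enough and union bounding over the $n$ starting vertices yields the desired statement, provided the coupling between the true exploration and $\mathcal{Z}_t$ is valid throughout the BFS.

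The main obstacle is precisely to justify this coupling uniformly in $t$ up to $t \leq t_c - n^{-\zeta'}$. Two error terms must be controlled: (i) $M^n(t)$ differs from the fluid-limit value $M(t)$ by $O(n^{-1/2+o(1)})$ due to fluctuations of the densities $X^n_i$, which, by Lipschitz continuity of the Perron eigenvalue, perturbs $\rho(M^n(t))$ by the same order; and (ii) after $m$ exploration steps, depletion perturbs the effective offspring means by $O(m/n)$. For the branching-process approximation to remain meaningful we need both perturbations to be small compared with the spectral gap $1 - \rho(t) \asymp (t_c - t)$; this is what limits how close $t$ may be to $t_c$. A naive pairing of these estimates gives only $\zeta' < 1/2$ from (i), but since the fluctuations of $(X^n_i)$ projected onto the Perron eigendirection are typically smaller than $n^{-1/2}$ by a polynomial factor (which is what drives the $n^{-1/3}$ threshold for Bohman--Frieze), a refined analysis along the lines of Section \ref{sec:Proofs} yields some $\zeta = \zeta(F) > 1/4$, the precise value of which depends on the leading coefficient of $(t_c - t)$ in the expansion of $1-\rho(t)$ and on the transverse covariance structure of $X^n$ at $t_c$. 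This rule-dependence is why the statement leaves $\zeta$ implicit.
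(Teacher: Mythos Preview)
Your proposal has a structural gap: a multitype Galton--Watson process with type space $\Omega_K$ cannot dominate the BFS exploration of a component in $BSR_n(t)$. In a bounded-size rule, whether two vertices currently in ``large'' components (type $\omega$) are joined depends on the entire history of their component sizes on $[0,t]$, not on the density vector $(X^n_i(t))_{i\in\Omega_K}$ at time $t$. There is therefore no mean matrix $M^n(t)$ that is ``an explicit polynomial in $(X^n_i(t))_i$'' governing the $\omega$-to-$\omega$ transitions, and the coupling you describe does not exist.

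The paper deals with this by following the Bhamidi--Budhiraja--Wang framework: one restricts attention to the subgraph of components of size exceeding $K$ and regards each such component as a cluster born at some time $s\in[0,t]$ with a growth trajectory $w\in D([0,T]:\Z_{\geq 0})$. Two clusters $x=(s_1,w_1)$ and $y=(s_2,w_2)$ are joined with probability governed by the kernel $k_{t,\del}(x,y)=\int_0^t w_1(u)w_2(u)b_\del(u)\,du$, which encodes the history dependence explicitly. The resulting branching process has a \emph{continuum} type space $[0,t]\times W$, and the role of your Perron eigenvalue is played by the operator norm $\rho_{t,\del}$ of the integral operator $K_{t,\del}$ on $L^2(\mu_\del)$. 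The exponential tail for the total progeny (the analogue of your ``standard tail bound'') is obtained not from a finite-dimensional tilt but from Lemmas~\ref{lem:2} and~\ref{lem:3}, which control $\phi_\infty$ and the Laplace functional of the Poisson offspring.

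Your explanation of where $\zeta>1/4$ comes from is also incorrect. It has nothing to do with fluctuations projected onto a Perron eigendirection or with covariance structure near $t_c$. The density concentration gives $\del=O(n^{-\gamma})$ for any $\gamma<1/2$, and the key step (Lemma~\ref{lem:BSR norm difference}) shows $|\rho_{t,\del}-\rho_t|\leq C\del^{\alpha'}$ for every $\alpha'<\alpha$, where $\alpha=\tfrac{1}{2}\min_i\{1+1/m_{a_i},\,1+2/m_{c_i}\}$ and $m_{a_i},m_{c_i}$ are the orders of vanishing of the rate functions $a_i,c_i$ at $t=0$ (not $t_c$). These functions vanish at $0$ because at time $0$ there are no components of size larger than one, and the perturbation analysis of the Radon--Nikodym derivative $d\mu/d\mu_\del$ is most delicate near $s=0$; this is what limits the exponent. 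One then takes $\zeta=\alpha/2$, and $\zeta>1/4$ follows simply because the $m_{a_i},m_{c_i}$ are all finite, so $\alpha>1/2$.
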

An analogous result is true for the discrete time process.
\begin{corollary}\label{cor:discrete time BSR}
Let $F$, $\zeta$ be as in Theorem \ref{thm:continuous time BSR}. 
Let $L_1^{DBSR}(u)$ denote the size of the
largest component of $DBSR_n(u)$ for $u\geq 0$. 
Let $t_c$ be the critical time for the process.
Then, for every fixed $\zeta'\in(0,\zeta)$, there exists a constant
$\overline{\overline{C}}$ depending only on $\zeta'$ and $F$ such that
$$\PR(L_1^{DBSR}(t)>\overline{\overline{C}}\log n/(t_c-t)^2)\stackrel{n\to\infty}{\longrightarrow}0$$
whenever $t=t(n)$ satisfies $t\leq t_c-n^{-\zeta'}$.
\end{corollary}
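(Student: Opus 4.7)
The plan is to transfer Theorem \ref{thm:continuous time BSR} to the discrete-time setting via the standard coupling between the two processes. Let $\tau_1<\tau_2<\cdots$ be the ordered arrival times of $\cup_{e\in V_n^4}\poi_e$, a Poisson process of total rate $n/2$. Then $BSR_n$ and $DBSR_n$ may be constructed on the same probability space so that they carry out exactly the same sequence of graph updates, but the $k$-th update occurs at time $\tau_k$ in $BSR_n$ and at time $2k/n$ in $DBSR_n$. With $k_n:=\lfloor tn/2\rfloor$, this gives the pathwise identity $L_1^{DBSR}(t) = L_1^{BSR}(\tau_{k_n})$.

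Because $\tau_{k_n}$ is a sum of $k_n=O(n)$ i.i.d.\ $\mathrm{Exp}(n/2)$ variables, a Bernstein/Chernoff bound provides
$$\PR\bigl(|\tau_{k_n} - 2k_n/n|>C\sqrt{\log n / n}\bigr)\to 0$$
for a suitable constant $C$. Combining this with $2k_n/n = t + O(1/n)$ shows that $\tau_{k_n}\leq s_n := t + C'\sqrt{\log n / n}$ with probability $1-o(1)$.

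Fix $\zeta'\in(0,\zeta)$ and a sequence $t=t(n)$ with $t\leq t_c - n^{-\zeta'}$, and pick any intermediate exponent $\zeta''\in(\zeta',\zeta)$. Since the critical window for bounded size rules has width $n^{-1/3}$, the exponent $\zeta$ supplied by Theorem \ref{thm:continuous time BSR} satisfies $\zeta\leq 1/2$, so the coupling perturbation $\sqrt{\log n / n}$ is asymptotically negligible compared with $n^{-\zeta'}$. In particular $s_n\leq t_c - n^{-\zeta''}$ and $t_c - s_n = (1+o(1))(t_c-t)$ for all large $n$. By monotonicity of $u\mapsto L_1^{BSR}(u)$, on the event $\{\tau_{k_n}\leq s_n\}$ we have
$$L_1^{DBSR}(t) = L_1^{BSR}(\tau_{k_n})\leq L_1^{BSR}(s_n),$$
and Theorem \ref{thm:continuous time BSR} applied to the deterministic sequence $s_n$ with exponent $\zeta''$ yields $L_1^{BSR}(s_n) \leq \overline{C}\log n / (t_c - s_n)^2 \leq \overline{\overline{C}}\log n / (t_c - t)^2$ with high probability. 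A union bound over the three high-probability events completes the proof.

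The only delicate point is matching the two time scales: the Poisson fluctuation $O(\sqrt{\log n / n})$ in the coupling must be strictly smaller than the subcritical margin $n^{-\zeta'}$, which forces the passage from $\zeta'$ to an intermediate exponent $\zeta''\in(\zeta',\zeta)$ before invoking Theorem \ref{thm:continuous time BSR}. Once this is arranged, the rest of the argument is entirely routine.
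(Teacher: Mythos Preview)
Your proposal is correct and follows essentially the same route as the paper: couple the discrete and continuous processes through the Poisson arrivals, use concentration of the Poisson clock to absorb an $O(\sqrt{\log n/n})$ time shift, and invoke Theorem~\ref{thm:continuous time BSR} at the shifted time (the paper uses the edge count and Berry--Ess\'een where you use arrival times and Chernoff, but this is cosmetic). One small point: your justification that $\zeta\le 1/2$ via the width of the critical window is not quite the right reason; the correct argument is that $\zeta=\alpha/2$ with $\alpha\le\tfrac{1}{2}(1+1/m_{a_K})\le 3/4$ since $m_{a_K}\ge 2$ for $K\ge 2$, which gives $\zeta\le 3/8<1/2$ directly from the paper's definitions.
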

\begin{remark}
It will follow from our arguments that
the conclusions of Theorem \ref{thm:continuous time BF} and Corollary \ref{cor:discrete time BF}
remain true for any bounded size rule with $K=1$.

We shall see in Section \ref{sec:examples} that if $K=2$ and $F$ satisfies
some conditions, then the constant $\zeta(F)$  is at least $1/3$,
hence the upper bounds in Theorem \ref{thm:continuous time BSR} and Corollary \ref{cor:discrete time BSR}
hold up to the critical window.

However, if $K\geq 3$ then the constant $\zeta$ that we get from our proof
will be smaller than $1/3$. So, the interval where the stated upper bound
holds falls shy of the critical window.
\end{remark}
\section{Proofs for the Bohman-Frieze process}\label{sec:Proofs}
We shall present the proofs of Theorem \ref{thm:continuous time BF} and
Corollary \ref{cor:discrete time BF} in this section.  We start off with
\subsection{Connection between the Bohman-Frieze process and an Inhomogeneous Random Graph Model}\label{sec:connection}
The analysis in \cite{bhamidi} is carried out by relating the dynamics of the Bohman-Frieze (BF)
process to an inhomogeneous random graph model. Since this plays a crucial role in the proof,
we briefly describe this connection and introduce the necessary notations. A more detailed
discussion of these and related models can be found in \cite{aldous}, \cite{bhamidi}
and \cite{bollobas}.

Let $X_n(v)$ be the number of singletons (i.e. isolated vertices) in the BF process at time $v$ and set
$x_n(v)=X_n(v)/n$. An edge added in the BF process at time $v$ can be of the following types:\\
(I) both its endpoints were isolated vertices in $BF_n(v-)$;\\
(II) only one of its endpoints was an isolated vertex in $BF_n(v-)$;\\
(III) none of its endpoints were isolated in $BF_n(v-)$.

Two singletons are added in the BF process (i.e. an edge of type I is created) if
one of the following happens:\\
(i) the first edge selected connects two isolated vertices or \\
(ii) the first edge selected does
not connect two isolated vertices but the second edge selected joins two isolated vertices.

Hence two singletons are added in the BF process at a rate
$$\frac{2}{n^3}
\left[\dbinom{X_n(v)}{2}\dbinom{n}{2}+\left(\dbinom{n}{2}-\dbinom{X_n(v)}{2}\right)\dbinom{X_n(v)}{2}\right]
=:na_n(x_n(v))$$
where $a_n(y)=a_0(y)+O(1/n)$ and $a_0:[0,1]\to\R_+$ is the function $a_0(y)=y^2-y^4/2$.

One can similarly show that a given non-singleton vertex (i.e. a vertex which is not isolated)
in $BF_n(v)$ gets connected
to some isolated vertex (i.e. an edge of type II is created) at a rate $c_n(x_n(v))$
where $c_n(y)=c_0(y)+O(1/n)$ and the function $c_0:[0,1]\to\R_+$ is given by $c_0(y)=(1-y^2)y$.

Finally, an analogous computation will show that two given non-singleton vertices
are joined (i.e. an edge of type III is created)
at a rate $b_n(x_n(v))/n$  where
$b_n(y)=b_0(y)+O(1/n)$ and the function $b_0:[0,1]\to\R_+$ is given by
$b_0(y)=1-y^2$.

The function $x_n(v)$ is highly concentrated around $x(v)$ (see \cite{spencer, bhamidi}),
which satisfies the ODE
$$x'(v)=-x(v)-x^2(v)+x^3(v),\ x(0)=1.$$
As a result, the rate functions, say for example $a_n(x_n(v))$ lies very close
to $a_0(x(v))$ with high probability. Hence, the BF process can be approximated
with high probability by a random graph model with deterministic rate functions
which we describe next.

Let $T=2t_c$ and let $a,b,c:[0,T]\to[0,1]$ be continuous. Consider the random graph
process with immigrating vertices and attachment (RGIVA) denoted by $IA_n(.)=IA_n(a,b,c)$
which can be described as follows:\\
(I) $IA_n(0)$  is the empty graph.\\
(II) Given $IA_n(v)$ for some $v\in[0,T)$,
a new component consisting of two vertices and an edge connecting them
(which we call a doubleton) is born in the interval $(v,v+dv]$
with rate $na(v)$;\\
(III) For any vertex $i$ in $IA_n(v)$,  a new vertex is born
and attaches itself to $i$  with an edge in $(v,v+dv]$ with rate $c(v)$;\\
(IV) Given two vertices $i_1,i_2$  in $IA_n(v)$, a new edge connecting them
is added in $(v,v+dv]$ with rate $b(v)/n$.\\
The last three events happen independently.\\
This model (introduced in \cite{bhamidi}) is a generalization of the RGIV model
of \cite{aldous} and is a good approximation of the BF process when
$a(v)=a_0(x(v))$, $b(v)=b_0(x(v))$ and $c(v)=c_0(x(v))$.

For fixed $v$, $IA_n(a,b,c)_v$ can be obtained by the following two step construction:

(Step 1) Run a Poisson process $\poi$ with rate $na(s)ds$ in the interval $[0,v]$.
At each arrival of this Poisson process a new doubleton is born. A doubleton born at time
$s\in[0,v)$ grows its own component independent of other components according to the following
Markov process: if $w(.)$ denotes the size of the component of the doubleton born at time $s$
as a function of time,
then given $\{w(u)\}_{s\leq u\leq s_1}$, the component size grows by one in $(s_1,s_1+ds_1]$
with rate $w(s_1)c(s_1)$. Continue this process till time $v$. In the end we shall
have a collection of pairs $\{(s_i,w_i)\}_{s_i\in\poi}$ (called clusters) where
$w_i:[0,v]\to\mathbb{\Z}_{\geq 0}$ is the function such that $w_i(u)$ denotes the size at time $u$, of the component
of the doubleton born at time $s_i$ with the convention $w_i(u)=0$ for
$u<s_i$.

(Step 2) Conditional on step 1, add an edge between two clusters $x=(s_1,w_1)$, $y=(s_2,w_2)$
with probability
$p_{n,v}(x,y):=1-\exp[-\frac{1}{n}\int_0^v w_1(u)w_2(u)b(u) du]$ for every
$x,y\in \{(s_i,w_i)\}_{s_i\in\poi}$ independently.

The distribution of the component sizes of the graph obtained via the two step process
is the same as that of $IA_n(a,b,c)_v$. This allows one to view $IA_n(a,b,c)$ as
an inhomogeneous random graph (IRG) model whose construction we recall next.

A triplet $(\overline{X},\overline{T},\overline{\mu})$ where $\overline{X}$
is Polish, $\overline{T}$ is the Borel $\sigma$-field and $\overline{\mu}$
is a finite measure is called a type space.

A measurable function $\overline{k}:\overline{X}\times \overline{X}\to[0,\infty)$
is called a kernel. Each kernel $\overline{k}$ defines an integral operator $\overline{K}$ on
$L^2(\overline{X},\overline{T},\overline{\mu})$ given by
$\overline{K}f(x)=\int_{\overline{X}}\overline{k}(x,y)f(y)\overline{\mu}(dy)$.
We shall denote by $\overline{k}^{(i)}$, the $i$-fold convolution of $\overline{k}$
with itself. Hence the integral operator associated with $\overline{k}^{(i)}$ is
$\overline{K}^{i}$. We shall write $\rho(\overline{k},\overline{\mu})$ to denote the
norm of $\overline{K}$ in $L^2(\overline{\mu})$.

A non-negative measurable function $\overline{\phi}$ on $\overline{X}$ is called
a weight function.

Given a type space $(\overline{X},\overline{T},\overline{\mu})$, a weight function
$\overline{\phi}$ and a sequence of kernels $\overline{k}_n$, we construct
a random graph by letting its vertices be points of a Poisson process $\poi$
on $\overline{X}$ with intensity $n\overline{\mu}(dx)$ and then join
any two points $x,y\in\poi$ with probability $\frac{\overline{k}_n(x,y)}{n}\wedge 1$.
The volume of a connected component $C$ is given by $\mathrm{volume}(C):=\sum_{x\in C}\overline{\phi}(x)$.

Then $IA_n(a,b,c)_v$ corresponds to the IRG model associated with $(X,\mu,k_{n,v},\phi_v)$
where $X=[0,T]\times W$ and
$W=D([0,T]:\mathbb{Z}_{\geq 0})$ is equipped with the Skorohod topology;
$\mu(d(s,w))=a(s)ds\ \nu_s(dw)$ where $\nu_s$ is the law of the function denoting
the size of a cluster born at time $s$ in $IA_n(a,b,c)$;
$k_{n,v}(x,y)=np_{n,v}(x,y)$ and $\phi_v(s,w)=w(v)$.

We also define the kernel $k_v$ by
$$k_v((s,w),(s_1,w_1)):=\int_0^v w_1(u)w_2(u)b(u)du.$$
It is clear that $k_{n,v}(x,y)\leq k_v(x,y)$ for $x,y\in X$.
The IRG model associated with $(X,\mu,k_v,\phi_v)$ will be denoted by $RG_{n,v}(a,b,c)$.
Note that for constructing $RG_{n,v}(a,b,c)$,
we have taken the constant sequence of kernels where each element is $k_v$.

Sometimes we shall write
$\mu(a,b,c)$ to express the dependence of $\mu$ on $a,b,c$
($\mu$ as a measure depends only on the functions $a$ and $c$, but we prefer to write it this way
to put emphasis on the underlying IRG model).
We shall write $\rho_v(a,b,c)$ to denote the norm of the operator $K_v$
associated with $k_v=k_v(a,b,c)$ in $L^2(\mu(a,b,c))$ (hence $\rho_v(a,b,c)=\rho(k_v(a,b,c),\mu(a,b,c))$).
\subsection{Proof of Theorem \ref{thm:continuous time BF}}
We present the proof of Theorem \ref{thm:continuous time BF} in this section.
Throughout the proof $C$, $C'$ etc.
will denote positive universal constants whose values may change from line to line.
Special constants will be indexed, as for example $C_1$, $C_2$ etc.
In the proof we shall assume $\lambda=1$ since it will not make a difference.

The first few steps consist of reducing the problem to getting
an upper bound on the total progeny of a continuum type branching process.
Assume that $t=t(n)$ satisfies $t\leq t_c-n^{-1/3}$.
Let $C_n^0(t)$ denote the component of the first doubleton appearing in $BF_n(t)$.
Fix $\gamma\in(1/3,1/2)$ and define $E_n:=\{\sup_{0\leq u\leq T} |x_n(u)-x(u)|>n^{-\gamma}\}$
where $T=2t_c$.
Since $a_0(x(u))$
is $C^1$ on $[0,T]$ and $|a_n(u)-a_0(u)|=O(1/n)$, it follows that on the event $E_n^c$
\begin{align}
a_n(x_n(u))&\leq a_0(x(u))+C_1(\frac{1}{n}+|x_n(u)-x(u)|)\nonumber\\
&\leq a_0(x(u))+\frac{C_2}{n^{\gamma}}\nonumber
\end{align}
and similar upper bounds hold for $b_n(x_n(u))$ and $c_n(x_n(u))$.
Set $\delta=\delta_n=C_2n^{-\gamma}$ and define $a_{n,\delta}(u)=(a_0(x(u))+\delta_n)$
for $t\in [0,T]$. Define $b_{n,\delta}(u)$ and $c_{n,\delta}(u)$ similarly.
Hence, $a_n(x_n(u))\leq a_{n,\delta}(u)$ on $E_n^c$ and similar upper bounds hold for
$b_n(x_n(u))$ and $c_n(x_n(u))$.
Note that
$$\sup _{[0,T]}\max\{a_{n,\delta}(u),b_{n,\delta}(u),c_{n,\delta}(u)\}\leq 1\text{ for large }n.$$

 Consider the first immigrating doubleton in $IA_n(a_{n,\delta},b_{n,\delta},c_{n,\delta})$
and let $C_{n,\delta}^{IA}(t)$ denote the size of the
component of $IA_n(a_{n,\delta},b_{n,\delta},c_{n,\delta})_t$ which contains the first immigrating doubleton.
Let $\nu_{s,\del}$ denote the measures on $W$ such that
$\mu(a_{n,\del},b_{n,\del},c_{n,\del})(d(s,w))=a_{n,\del}(s)ds\  \nu_{s,\del}(dw)$.
Consider the IRG model $RG_{n,t}(a_{n,\delta},b_{n,\delta},c_{n,\delta})$
conditioned on having a point $(0,w)$ where
$w$ is distributed according to $\nu_{0,\del}$. Let
$C_{n,\delta}^{RG}(t)$ be the volume of the component containing $(0,w)$.

The assertions in the next Lemma follow from Lemma 5.1, Lemma 5.2,
Lemma 6.1, Lemma 6.4 and Lemma 6.10 of \cite{bhamidi}.
\begin{lemma}\label{lem:bhamidi} The following hold.\\
(i) Bound on $\PR(E_n)$: $\PR(E_n)\leq \exp(-Cn^{1-2\gamma})$.\\
(ii) Connection between BF process, $IA_n(a_{n,\delta},b_{n,\delta},c_{n,\delta})$ and
$RG_{n,t}(a_{n,\delta},b_{n,\delta},c_{n,\delta})$: We have
$$\PR(L_1^{BF}(t)>m)\leq nT\PR(C_n^0(t)>m).$$
Further,
$$\PR(C_n^0(t)>m,E_n^c)\leq\PR(C_{n,\delta}^{IA}(t)>m)\leq \PR(C_{n,\delta}^{RG}(t)>m).$$\\
(iii) Properties of operator norms: The function $f(u):=\rho_{u}(a_0(x(\cdot)),b_0(x(\cdot)),c_0(x(\cdot)))$
is strictly increasing and satisfies $f(t_c)=1$. Further,
there exists some positive constant $\eta$ such that
$$(1-f(u))/(t_c-u)\to\eta\text{ as }u\uparrow t_c.$$
\end{lemma}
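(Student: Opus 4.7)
The plan is to handle the three assertions separately, since they correspond to distinct stages of the reduction from the BF process to a controllable inhomogeneous random graph model.

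For (i), I would view $X_n(\cdot)$ as a pure jump Markov process whose jumps are bounded and whose overall rate is of order $n$. Writing the Doob decomposition $X_n(v)=X_n(0)+A_n(v)+M_n(v)$ and comparing the compensator $A_n/n$ with the drift of the ODE $x'=-x-x^2+x^3$ via Gronwall's inequality reduces the task to controlling $\sup_{u\le T}|M_n(u)|/n$. A Freedman-type exponential inequality for jump martingales with bounded jumps and predictable quadratic variation of order $n$ yields deviation probability at most $\exp(-Cn\eps^2)$ at scale $\eps$, which becomes $\exp(-Cn^{1-2\gamma})$ at $\eps=n^{-\gamma}$. Uniform control of the rates $a_n,b_n,c_n$ is then automatic from the Lipschitz continuity of $a_0,b_0,c_0$ on $[0,1]$.

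For (ii), the first inequality is a first-moment bound. Every component of $BF_n(t)$ of size at least two contains at least one doubleton (a type~I edge), and the expected number of doubletons produced by time $T$ is at most $n\int_0^T a_n(x_n(u))\,du\le nT$ for large $n$. By exchangeability among doubletons (conditional on birth times, each is uniform over the available isolated pairs), the probability that a tagged doubleton lies in a component of size exceeding $m$ equals $\PR(C_n^0(t)>m)$, and a union bound gives the claim. The remaining two inequalities are couplings: on $E_n^c$ the BF rates are dominated pointwise by $a_{n,\del},b_{n,\del},c_{n,\del}$, so a thinning argument embeds the BF component in the corresponding $IA_n$ component; and since $k_{n,v}(x,y)=n\bigl(1-e^{-k_v(x,y)/n}\bigr)\le k_v(x,y)$, the edge probabilities in the two-step construction of $IA_n$ are dominated by those of $RG_{n,v}$, embedding the $IA_n$ component in the $RG_{n,v}$ component.

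For (iii), I would analyze the family of integral operators $K_u$ on $L^2(\mu(a_0(x(\cdot)),b_0(x(\cdot)),c_0(x(\cdot))))$ whose norms define $f(u)$. Strict monotonicity is immediate from the pointwise monotonicity in $u$ of the kernel $k_u((s,w),(s',w'))=\int_0^u w(r)w'(r)b_0(x(r))\,dr$, which forces the $L^2$ operator norm to be strictly increasing. The identity $f(t_c)=1$ is the characterization of the critical time in the Bollob\'as--Janson--Riordan framework for inhomogeneous random graphs, where a giant component emerges precisely when the spectral radius of the associated kernel reaches one. For the linear behavior near $t_c$, since the critical kernel is irreducible and its principal eigenfunction is simple and strictly positive, analytic perturbation theory for the principal eigenvalue gives differentiability of $f$ at $t_c^-$, and then $\eta=-f'(t_c^-)$. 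The main obstacle, and the place where I expect to rely most heavily on the careful analysis in the cited lemmas of Bhamidi, Budhiraja and Wang, is the strict positivity of $\eta$: this requires a quantitative nondegeneracy of $\partial_u k_u$ at $u=t_c$, combined with a uniform $L^2$-lower bound on the critical eigenfunction over the type space.
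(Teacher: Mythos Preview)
The paper does not prove this lemma at all: it simply records that the three assertions are quoted from Lemmas~5.1, 5.2, 6.1, 6.4 and 6.10 of \cite{bhamidi}. Your sketch is therefore not competing against a proof in the paper but against a citation, and as a roadmap for how those cited lemmas go it is accurate: the Freedman/Gronwall route for~(i), the rate-domination coupling and the inequality $k_{n,v}\le k_v$ for the second chain in~(ii), and monotonicity of $k_u$ plus the Bollob\'as--Janson--Riordan criticality criterion and eigenvalue perturbation for~(iii) are exactly the mechanisms used in \cite{bhamidi}.

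One point in your sketch of~(ii) is not quite right and would not survive as written. The doubletons in $BF_n$ are \emph{not} exchangeable, even conditionally on their birth times: the first doubleton is born strictly earlier than the others and therefore has strictly more time to accrue attachments and mergers. The correct argument for $\PR(L_1^{BF}(t)>m)\le nT\,\PR(C_n^0(t)>m)$ is a union bound over doubletons together with a stochastic monotonicity (coupling) step showing that the component of any later doubleton is stochastically dominated by that of the first one; ``exchangeability'' would, if anything, give equality rather than the needed inequality. Apart from this wording issue, your outline matches the content behind the citations.
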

From Lemma \ref{lem:bhamidi}, we get
\begin{align}\label{eqn:1}
\PR(L_1^{BF}(t)>m)&\leq nT\left[\PR(C_n^0(t)>m,E_n^c)+\PR(E_n)\right]\\
& \leq nT\left[\PR(C_{n,\delta}^{RG}(t)>m)+\exp(-Cn^{1-2\gamma})\right]\nonumber\\
&=nT\left[\int_{W}\PR_{w_0}(C_{n,\delta}^{RG}(t)>m)\ \nu_{0,\del}(dw_0)+\exp(-Cn^{1-2\gamma})\right]\nonumber.
\end{align}
Here, $\PR_{w_0}(\cdot)=\PR(\cdot|\ (0,w_0)\in RG_{n,t}(a_{n,\delta},b_{n,\delta},c_{n,\delta}))$
for fixed $w_0\in W$.

Consider now a branching process on $[0,t]\times W$ as follows.
Write $k_{t,\delta}$ for $k_t(a_{n,\delta},b_{n,\delta},c_{n,\delta})$
and $\mu_{\del}$ for $\mu(a_{n,\delta},b_{n,\delta},c_{n,\delta})$.
Define $x_0=(0,w_0)$ to be generation
zero of the branching process. For $k\geq 0$, denote by $N_k$, the total
number of points in generation $k$. (Thus $N_0=1$.) For $k\geq 0$, each of the points
$x_1^{(k)},\hdots,x_{N_k}^{(k)}$ in generation $k$ gives birth to its own offsprings
according to a Poisson processes with intensities $k_{t,\delta}(x_i^{(k)},y)\ \mu_{\del}(dy)$ respectively
independent of the other points in generation $k$.

Let $G(x_0):=\sum_{k=0}^{\infty}\sum_{i=1}^{N_k}\phi_t(x_i^{(k)})$
denote the total progeny. By a breadth first search argument (see Lemma 6.12 in \cite{bhamidi}),
we have
\begin{equation}\label{eqn:4}
\PR_{w_0}(C_{n,\delta}^{RG}(t)>m)\leq \PR_{w_0}(G(x_0)>m).
\end{equation}
From \eqref{eqn:1} and \eqref{eqn:4}, we have
\begin{equation}\label{eqn:5}
\PR(L_1^{BF}(t)>m)\leq nT\left[\E_{\nu_{0,\del}}(\PR_{w_0}(G(x_0)>m))+\exp(-Cn^{1-2\gamma})\right].
\end{equation}

The following Lemma is an improvement over Lemma 6.9 of \cite{bhamidi}.
Recall that $T=2t_c$.
\begin{lemma}\label{lem:BF norm difference}
There exists a positive constant $\beta_0$ such that
$$|\rho_u(a_{n,\delta},b_{n,\delta},c_{n,\delta})-\rho_u(a,b,c)|\leq \beta_0\delta, \text{ for each }u\in[0,T]$$
where $a(\cdot)=a_0(x(\cdot))$, $b(\cdot)=b_0(x(\cdot))$ and $c(\cdot)=c_0(x(\cdot))$.
\end{lemma}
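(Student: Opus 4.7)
The plan is to reduce $\rho_u$ to the operator norm of an integral operator on the much smaller space $L^2([0,u],b(r)\,dr)$ and then compare it with the perturbed object via an explicit unitary change of measure. The key observation is that $k_u((s,w),(s_1,w_1))=\int_0^u w(r)w_1(r)b(r)\,dr$ is simply the inner product in $L^2([0,u],b\,dr)$ of the functions $r\mapsto w(r)$ and $r\mapsto w_1(r)$. Consequently, defining $M\colon L^2(\mu)\to L^2([0,u],b\,dr)$ by $Mf(r)=\int w(r)\,f(s,w)\,\mu(d(s,w))$, one has $K_u=M^*M$, and hence $\rho_u(a,b,c)=\|MM^*\|$, where $MM^*$ is the integral operator on $L^2([0,u],b\,dr)$ with symmetric continuous kernel
\[
G(r,r')=\int w(r)w(r')\,\mu(d(s,w))=\int_0^{r\wedge r'}a(s)\,\E_s[w(r)w(r')]\,ds.
\]
The same factorization gives $\rho_u(a_{n,\delta},b_{n,\delta},c_{n,\delta})=\|M_\delta M_\delta^*\|_{L^2(b_\delta\,dr)}$, acting on the analogous kernel $G_\delta$.

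Under $\nu_s$, the process $w$ is the pure birth chain with rates $k\mapsto k\,c(\cdot)$ started at $w(s)=2$. Its first two moments satisfy explicit linear ODEs in $r$ with coefficients depending linearly on $c$, giving $\E_s[w(r)]=2e^{\int_s^r c(v)\,dv}$ and $\E_s[w(r)^2]=6e^{2\int_s^r c(v)\,dv}-2e^{\int_s^r c(v)\,dv}$, while the mixed moment factors as $\E_s[w(r)w(r')]=\E_s[w(r\wedge r')^2]\,\exp\bigl(\int_{r\wedge r'}^{r\vee r'}c(v)\,dv\bigr)$. Replacing $c$ by $c+\delta$ alters each of these expressions by an additive $O(\delta)$ uniformly on $[0,T]^2$, and combined with $a_\delta-a=\delta$ this yields
\[
\sup_{r,r'\in[0,u]}|G_\delta(r,r')-G(r,r')|\le C_1\delta.
\]

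To place the two operators on a common Hilbert space I would use the unitary $U\colon L^2(b\,dr)\to L^2(b_\delta\,dr)$ given by $Uf=f\sqrt{b/b_\delta}$; a direct computation shows $UMM^*U^{-1}$ is the integral operator on $L^2(b_\delta\,dr)$ with kernel $\tilde G(r,r')=\sqrt{b(r)b(r')/(b_\delta(r)b_\delta(r'))}\,G(r,r')$, and $\|UMM^*U^{-1}\|_{L^2(b_\delta\,dr)}=\rho_u(a,b,c)$. I then decompose the kernel difference as
\[
\tilde G-G_\delta=(G-G_\delta)+\Bigl(1-\sqrt{\tfrac{b(r)b(r')}{b_\delta(r)b_\delta(r')}}\Bigr)G,
\]
whose first summand is $O(\delta)$ by the previous step. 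For the second, the elementary bound $1-\sqrt{b/b_\delta}\le\delta/(b+\delta)$ together with $G(r,r')\le C(r\wedge r')$ (since $a$ and the second moment are uniformly bounded on $[0,T]^2$ and the $s$-integral is over an interval of length $r\wedge r'$) produces the pointwise bound $\delta\cdot(r\wedge r')/(b(r)+\delta)$. Since $b(v)\sim 2v$ near $0$ (a direct consequence of $b_0(y)=1-y^2$ and $x'(0)=-1$), this quantity is itself $O(\delta)$ uniformly in $r,r'\in[0,u]$. A Hilbert-Schmidt estimate then gives $\|UMM^*U^{-1}-M_\delta M_\delta^*\|\le\beta_0\delta$, whence the lemma.

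The main obstacle is the last estimate. Because $b(0)=0$, the Radon-Nikodym factor $\sqrt{b/b_\delta}$ is not uniformly close to $1$ (near the origin it can be as small as $\sqrt{2r/(2r+\delta)}$), so a crude application of the change of measure produces only an $O(\sqrt\delta)$ rate. It is precisely the linear vanishing $G(r,r')=O(r\wedge r')$ at the origin, forced by the explicit integral representation above, that absorbs the degeneracy of $b$ and yields the sharp $O(\delta)$ bound, which is the key improvement over Lemma~$6.9$ of \cite{bhamidi}.
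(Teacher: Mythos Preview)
Your proof is correct and takes a genuinely different route from the paper's. The paper works entirely on the large space $X=[0,T]\times W$: it writes $\rho_u(a,b,c)=\rho\big(k_u\sqrt{f(x)f(y)},\mu_\delta\big)$ with $f=d\mu/d\mu_\delta$ given explicitly in terms of the jump times of $w$, and then bounds the Hilbert--Schmidt norm of $k_u\big(\sqrt{f(x)f(y)}-1\big)$ directly. The delicate step there is that $c(0)=0$, so the factors $c(\tau_i)/c_\delta(\tau_i)$ are not uniformly close to $1$ when a jump occurs near time $0$; this is handled by a H\"older-and-moment argument using $c_\delta(t)\ge\max(\delta,m_2 t)$. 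Your factorisation $K_u=M^*M$ collapses the problem to a kernel $G$ on the one-dimensional interval $[0,u]$, whose dependence on $(a,c)$ enters only through the explicit Yule moments, so $\|G_\delta-G\|_\infty=O(\delta)$ is immediate; the remaining degeneracy (now coming from $b(0)=0$ in the change-of-measure factor) is absorbed by the linear vanishing $G(r,r')=O(r\wedge r')$ that you isolate. This is cleaner and sidesteps the path-level Radon--Nikodym calculation entirely; on the other hand it leans on the closed-form moments and on $b(v)\sim 2v$, whereas the paper's approach, though heavier, is what is adapted in Section~\ref{sec:proofs1} to general bounded-size rules, where several rate functions vanish to higher order at $0$ and the bound correspondingly weakens to $O(\delta^{\alpha'})$.
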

The proof of Lemma \ref{lem:BF norm difference} is given in the Section \ref{sec:Proof of Lemma for BF}.
An application of Lemma \ref{lem:BF norm difference}  together with $(iii)$ of Lemma \ref{lem:bhamidi}
yields
\begin{align}\label{eqn:6}
\rho_t(a_{n,\delta},b_{n,\delta},c_{n,\delta})&\leq \rho_t(a,b,c)+\frac{\beta_0 C_2}{n^{\gamma}}\\
&\leq 1-C(t_c-t)+\frac{\beta_0 C_2}{n^{\gamma}}\nonumber\\
&\leq 1-\beta(t_c-t)\nonumber
\end{align}
for a universal constant $\beta>0$. Here
we have used the fact that $t\leq t_c-n^{-1/3}$ and $\gamma>1/3$.
Let $\Delta=\Delta_{n,t}:=1-\rho_t(a_{n,\delta},b_{n,\delta},c_{n,\delta})$.
We have just shown that $\Delta\geq \beta(t_c-t)>0$.

Denote by $K_{t,\delta}$,
the integral operator associated with the kernel $k_{t,\delta}$. Let $X_t:=[0,t]\times W$.
Let $\overline{\mu}:=\mu(1,1,1)$, i.e. the measure on $[0,T]\times W$ associated with
the functions which are identically equal to one and let
$\overline{\nu}_s$ be the family of measures
on $W$ such that $\overline{\mu}(d(s,w))=ds\ \overline{\nu}_s(dw)$.

We shall use the following lemmas to get an upper bound on the right side of \eqref{eqn:5}.
\begin{lemma}\label{lem:2}
Define the function
$\phi_{\infty}:=\phi_t+\sum_{i=1}^{\infty}(1+\Delta/2)^i K_{t,\delta}^i\phi_t.$
Then for every $(s,w)\in [0,t]\times W$, we have
$$\phi_{\infty}(s,w)\leq \frac{C_3w(T)}{\Delta}$$
for some universal constant $C_3$.
\end{lemma}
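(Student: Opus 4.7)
The plan is to exploit the factorized structure
\[ k_{t,\delta}((s,w),(s',w')) = \int_0^t b_{n,\delta}(u)\,w(u)\,w'(u)\,du \]
to reduce the analysis of $K_{t,\delta}$ to that of an operator on $L^2([0,t],b_{n,\delta}(u)\,du)$. Setting $\psi_u(s,w):=w(u)$ and defining $A\colon L^2([0,t],b_{n,\delta}\,du)\to L^2(\mu_\delta)$ by $(A\alpha)(s,w):=\int_0^t\alpha(u)\,w(u)\,b_{n,\delta}(u)\,du$, a direct computation gives $K_{t,\delta}=AA^{\ast}$, and $\mathcal{M}:=A^{\ast}A$ is the positive self-adjoint integral operator with symmetric kernel $M(u,v):=\int w(u)w(v)\,\mu_\delta(d(s,w))$. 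Since $AA^{\ast}$ and $A^{\ast}A$ share non-zero spectra, $\|\mathcal{M}\|_{op}=\rho_t(a_{n,\delta},b_{n,\delta},c_{n,\delta})=1-\Delta$; this is the only place the operator norm of $K_{t,\delta}$ enters.

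An induction on $i\ge 1$ then yields
\[ K_{t,\delta}^i\phi_t(s,w)=\int_0^t b_{n,\delta}(u)\,w(u)\,g_{i-1}(u)\,du,\qquad g_i:=\mathcal{M}^i g_0,\ g_0(u):=M(u,t). \]
Since the trajectories $w(\cdot)$ are non-decreasing, $w(u)\le w(T)$ for every $u\le t$, so
\[ K_{t,\delta}^i\phi_t(s,w)\le w(T)\,\langle g_{i-1},\mathbf{1}\rangle_{b_{n,\delta}}, \]
where $\langle\cdot,\cdot\rangle_{b_{n,\delta}}$ denotes the inner product on $L^2([0,t],b_{n,\delta}\,du)$. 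Summing and applying Cauchy--Schwarz,
\[ \sum_{i=1}^{\infty}(1+\Delta/2)^i K_{t,\delta}^i\phi_t(s,w)\le w(T)\,\frac{(1+\Delta/2)\,\|g_0\|_{b_{n,\delta}}\,\|\mathbf{1}\|_{b_{n,\delta}}}{1-(1+\Delta/2)(1-\Delta)}, \]
and the denominator equals $\Delta/2+\Delta^2/2\ge\Delta/2$.

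All that remains is to bound $\|g_0\|_{b_{n,\delta}}$ and $\|\mathbf{1}\|_{b_{n,\delta}}$ by universal constants. Trivially $\|\mathbf{1}\|_{b_{n,\delta}}^2\le T$ because $b_{n,\delta}\le 1$ on $[0,T]$, and $M(u,t)\le\int w(T)^2\,\mu_\delta(d(s,w))$, which is finite and uniformly bounded: under $\nu_{s,\delta}$ the cluster size $w$ starts at $2$ at time $s$ and evolves as a pure-birth process with per-vertex rate $c_{n,\delta}(\cdot)\le 1$ on $[0,T]$, so $\E_{s,\delta}[w(T)^2]\le C e^{CT}$ uniformly in $s$ and $n$. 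Adding the $i=0$ contribution $\phi_t(s,w)\le w(T)\le w(T)/\Delta$ (valid since $\Delta\le 1$) yields the lemma with a universal $C_3$. The only delicate point is the spectral identity $\|\mathcal{M}\|_{op}=1-\Delta$, which is the classical fact that $AA^{\ast}$ and $A^{\ast}A$ share non-zero spectra; the rest consists of elementary estimates enabled by the separable form of the kernel and the monotonicity of $w$.
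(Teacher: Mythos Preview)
Your argument is correct. The factorisation $K_{t,\delta}=AA^{\ast}$ and the passage to the auxiliary operator $\mathcal{M}=A^{\ast}A$ on $L^{2}([0,t],b_{n,\delta}\,du)$ is a genuinely different route from the paper's. The paper stays in $L^{2}(\mu_{\delta})$ throughout: it bounds $K_{t,\delta}^{i}\phi_t(s,w)$ by Cauchy--Schwarz as $\|k_{t,\delta}^{(i)}((s,w),\cdot)\|_{L^{2}(\mu_\delta)}\|\phi_t\|_{L^{2}(\mu_\delta)}$, then writes $k_{t,\delta}^{(i)}((s,w),\cdot)=K_{t,\delta}^{i-1}\bigl[k_{t,\delta}((s,w),\cdot)\bigr]$ and uses $\|K_{t,\delta}\|=1-\Delta$ directly, together with the pointwise bound $k_{t,\delta}((s,w),(s_1,w_1))\le C\,w(T)w_1(T)$. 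In both proofs the $w(T)$ factor is peeled off from one endpoint (you use monotonicity of $w$ in the $u$-integral, the paper uses it in the kernel bound), the middle iterates are controlled via the operator norm $1-\Delta$, and the remaining constants come from a uniform second-moment bound on $w(T)$. Your factorisation makes the positivity and the norm identity $\|AA^{\ast}\|=\|A^{\ast}A\|$ transparent and reduces everything to a kernel on $[0,t]$; the paper's approach is more hands-on and avoids introducing the auxiliary space. One small point: the identity $\|\mathcal{M}\|_{op}=\|K_{t,\delta}\|_{op}$ presupposes that $A$ is bounded, which you have not stated explicitly, but it follows immediately from the very moment bound $\int w(T)^{2}\,\mu_\delta<\infty$ that you invoke at the end.
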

Define the functions $f_{\ell}:=\phi_t+\sum_{i=1}^{\ell}(1+\Del/2)^iK_{t,\del}^i\phi_t$
for $\ell\geq 1$ and set $f_0=\phi_t$.
\begin{lemma}\label{lem:3}
Fix $x=(s,w)\in X_t$ and let $y_1,\hdots,y_N$ be the points of a Poisson process in $X_t$
having intensity $k_{t,\del}(x,y)\mu_{\del}(dy)$. Then there exists $\eta_0>0$ (independent of $x$) such that
for every $\eta\in(0,\eta_0]$, we have
$$\E\exp\left(\eta\Delta^2\sum_{i=1}^N f(y_i)\right)\leq \exp\left(\eta\Del^2(1+\Del/2)K_{t,\del}f(x)\right)$$
whenever $f\in\{f_{\ell}:\ell\geq 0\}$.
\end{lemma}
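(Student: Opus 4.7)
The plan is to apply Campbell's exponential formula for Poisson point processes to convert the stated moment generating function identity into an integral inequality, and then to control the resulting integral by combining a Taylor-type expansion of the exponential with the pointwise bound from Lemma~\ref{lem:2} and uniform exponential moment estimates for the cluster sizes.

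Concretely, the standard Poisson moment generating functional identity gives
\[
\E\exp\Bigl(\eta\Del^2\sum_{i=1}^N f(y_i)\Bigr)=\exp\Bigl(\int_{X_t}\bigl(e^{\eta\Del^2 f(y)}-1\bigr)\,k_{t,\del}(x,y)\,\mu_\del(dy)\Bigr),
\]
so the claim reduces to establishing the integral inequality
\[
I:=\int_{X_t}\bigl(e^{\eta\Del^2 f(y)}-1\bigr)\,k_{t,\del}(x,y)\,\mu_\del(dy)\leq\eta\Del^2(1+\Del/2)K_{t,\del}f(x).
\]
Using the elementary bound $e^z-1\leq z+\tfrac12 z^2 e^z$ (valid for $z\geq 0$), the linear part of the expansion yields exactly $\eta\Del^2 K_{t,\del}f(x)$, and it suffices to prove
\[
\eta\Del\int f(y)^2 e^{\eta\Del^2 f(y)}\,k_{t,\del}(x,y)\,\mu_\del(dy)\leq K_{t,\del}f(x).
\]
Note that this reformulation depends on $\ell$ only through $f=f_\ell$ itself.

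To control the quadratic term I would invoke Lemma~\ref{lem:2}, which gives $f(y)\leq C_3 w_y(T)/\Del$, and substitute it into both the polynomial and the exponential factors. The resulting integral has integrand of the form $\eta w_y(T)^2 e^{c\eta\Del w_y(T)}k_{t,\del}(x,y)$. I then split the domain according to whether $w_y(T)\leq M$ for a threshold $M$ of order $1/\eta$: on the bulk region $\{w_y(T)\leq M\}$, the prefactor $\eta w_y(T)^2 e^{c\eta\Del w_y(T)}$ is uniformly bounded by a small constant (shrinking with $\eta$), and the corresponding integral is absorbed into a small multiple of $K_{t,\del}f(x)\geq K_{t,\del}\phi_t(x)$.

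The main obstacle lies in the tail region $\{w_y(T)>M\}$. The key ingredient here is that under $\nu_{s,\del}$, the cluster size process $w(\cdot)$ is a pure-birth process started at $2$, with rate $k\,c_{n,\del}(u)$ from state $k$, and is therefore stochastically dominated at time $T$ by a Yule process of constant rate $\sup_{u\in[0,T]}c_{n,\del}(u)$. This yields a uniform exponential moment bound $\sup_{s\in[0,T]}\E_{\nu_{s,\del}}\exp(\alpha w(T))<\infty$ for all sufficiently small $\alpha>0$, uniformly in $n$ for large $n$. Choosing $\eta_0$ so small that $\eta C_3\Del$ stays comfortably below this admissible exponent and applying a Chebyshev-type argument, the tail contribution is bounded by a term of order $w_x(T)\exp(-c/\eta)$ for some $c>0$ independent of $x$, $\ell$ and large $n$. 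This is negligible compared to the pointwise lower bound on $K_{t,\del}f(x)\geq K_{t,\del}\phi_t(x)$, itself controllable in terms of $w_x(T)$ via $w_x(u)\geq 2$ for $u\geq s_x$. The delicate aspect of the argument is verifying that $\eta_0$ can be chosen uniformly in $x\in X_t$, in $\ell\geq 0$ (so that the conclusion applies to the entire family $\{f_\ell\}$), and in large $n$---the uniformity ensured precisely by the Yule-process domination.
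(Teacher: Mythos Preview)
Your reduction to the integral inequality
\[
\eta\Del\int_{X_t} f(y)^2 e^{\eta\Del^2 f(y)}\,k_{t,\del}(x,y)\,\mu_\del(dy)\leq K_{t,\del}f(x)
\]
is correct, but the substitution step that follows loses a factor of $1/\Del$ that you never recover. If you plug the Lemma~\ref{lem:2} bound $f(y)\leq C_3 w_y(T)/\Del$ into the \emph{full} quadratic $f(y)^2$, the integrand becomes
\[
\eta\Del\cdot\frac{C_3^2 w_y(T)^2}{\Del^2}\,e^{C_3\eta\Del w_y(T)}\,k_{t,\del}(x,y)
=\frac{C_3^2\eta}{\Del}\,w_y(T)^2 e^{C_3\eta\Del w_y(T)}\,k_{t,\del}(x,y),
\]
not the $\eta w_y(T)^2 e^{c\eta\Del w_y(T)} k_{t,\del}(x,y)$ you wrote. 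Since $\Del$ may be as small as $\beta(t_c-t)\asymp n^{-1/3}$ while $\eta_0$ must be an absolute constant (otherwise the final Chernoff bound $\exp(-\eta_0\Del^2 m)$ is useless), no uniform choice of $\eta_0$ kills the prefactor $\eta/\Del$. Your bulk estimate is also off independently of this: with $M\sim 1/\eta$ you have $\eta w_y(T)^2\leq \eta M^2\sim 1/\eta$, which blows up rather than shrinks.

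The paper's proof avoids this loss by substituting the Lemma~\ref{lem:2} bound into only \emph{one} of the two factors of $f$ in the quadratic term, yielding the need to compare $\E_{\nu_{s_1,\del}}\bigl(w_1(T)^2 f(s_1,w_1)\bigr)$ with $\E_{\nu_{s_1,\del}}\bigl(w_1(v) f(s_1,w_1)\bigr)$ for each $v\in(s_1,t)$. This is where the hypothesis $f\in\{f_\ell\}$ is actually used: expanding $f_\ell=\sum_{i\leq\ell}(1+\Del/2)^iK_\del^i\phi_t$ and unwinding one layer of the kernel, the comparison reduces to $\E_{\nu_{s_1,\del}}(w_1(T)^2 w_1(z))\leq C$ and $\E_{\nu_{s_1,\del}}(w_1(v)w_1(z))\geq 4$, both of which are elementary. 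Your proposal never uses the specific structure of $f_\ell$, and indeed the inequality you try to prove pointwise in $f$ does not hold for arbitrary $f\leq C_3 w_y(T)/\Del$.
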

To simplify notations, throughout the rest of this section,
we shall write $K_{\del}$ (resp. $k_{\del}$) for $K_{t,\delta}$ (resp. $k_{t,\delta}$),
$\phi$ for $\phi_t$, $\rho$ for $(1-\Delta)$ and $a_{\del}$ (resp. $b_{\del}$, $c_{\del}$) for $a_{n,\del}$ (resp. $b_{n,\del},c_{n,\del}$).
\begin{proof}[Proof of Lemma \ref{lem:2}]
First note that
\begin{align}\label{eqn:lem2-1}
\|\phi\|_{L^2(\mu_{\del})}^2
&=\int_{X_t} w_1(t)^2 \mu_{\del}(d(s_1,w_1))\\
&\leq \int_0^t a_{\del}(s_1)\E_{\nu_{s_1,\del}}(w_1(T))^2 ds_1\nonumber\\
&\leq C\E_{\nu_{0,\del}}(w_1(T))^2\nonumber\\
&\leq C\E_{\overline{\nu}_{0}}(w_1(T))^2<\infty.\nonumber
\end{align}
Also
\begin{align}\label{eqn:lem2-2}
|K_{\del}^i\phi(s,w)|&=|\int_{X_t}k_{\del}^{(i)}((s,w),(s_1,w_1))\phi(s_1,w_1)\mu_{\del}(d(s_1,w_1))|\\
&\leq \|k_{\del}^{(i)}((s,w),.)\|_{L^2(\mu_{\del})}\|\phi\|_{L^2(\mu_{\del})}\nonumber\\
&\leq C\|K_{\del}^{i-1}k_{\del}((s,w),.)\|_{L^2(\mu_{\del})}\nonumber\\
&\leq C\rho^{i-1}\|k_{\del}((s,w),.)\|_{L^2(\mu_{\del})}.\nonumber
\end{align}
Now
\begin{align}\label{eqn:bound on k}
k_{\del}((s,w),(s_1,w_1))=\int_0^t w(u)w_1(u)b_{\del}(u)du\leq Cw(T)w_1(T).
\end{align}
Hence
\begin{align}\label{eqn:lem2-3}
\|k_{\del}((s,w),.)\|_{L^2(\mu_{\del})}^2&=\int_{X_t}k_{\del}((s,w),(s_1,w_1))^2\mu_{\del}(d(s_1,w_1))\\
&\leq C\int_0^t a_{\del}(s_1)ds_1\int_{w_1\in W}w^2(T)w_1^2(T)\nu_{s_1,\del}(dw_1)\nonumber\\
&\leq C'w^2(T)\E_{\overline{\nu}_0}(w_1(T)^2)\leq C''w^2(T).\nonumber
\end{align}
From \eqref{eqn:lem2-1}, \eqref{eqn:lem2-2} and \eqref{eqn:lem2-3}, we have
\begin{align}
\phi_{\infty}(s,w)&= \phi(s,w)+\sum_{i=1}^{\infty}(1+\Del/2)^i K_{\del}^i\phi(s,w)\nonumber\\
&\leq w(T)+C\sum_{i=1}^{\infty}(1+\Del/2)^i\rho^{i-1}w(T)\nonumber\\
&\leq C'w(T)\left(1+\frac{1}{1-(1-\Del)(1+\Del/2)}\right)\nonumber\\
&\leq C_3w(T)/\Del.\nonumber
\end{align}
\end{proof}
\begin{proof}[Proof of Lemma \ref{lem:3}]
We have
\begin{equation}\label{eqn:lem3-1}
\E\exp\left(\eta\Delta^2\sum_{i=1}^N f(y_i)\right)=\exp\left(\int_{X_t}k_{\del}(x,u)(\exp(\eta\Del^2f(u))-1)\mu_{\del}(du)\right).
\end{equation}
Now
\begin{align}\label{eqn:lem3-2}
&\int_{X_t}k_{\del}(x,u)(\exp(\eta\Del^2f(u))-1)\mu_{\del}(du)\\
&\hskip10pt=\int_{X_t}k_{\del}(x,(s_1,w_1))(\exp(\eta\Del^2f(s_1,w_1))-1)\mu_{\del}(d(s_1,w_1))\nonumber\\
&\hskip10pt=\int_{s_1=0}^t\int_{w_1\in W}\left(\int_{s_1}^t w(v)w_1(v)b_{\del}(v)dv\right)
(\exp(\eta\Del^2f(s_1,w_1))-1)\nu_{s_1,\del}(dw_1)a_{\del}(s_1)ds_1\nonumber\\
&\hskip10pt=\int_{s_1=0}^t\int_{v=s_1}^t w(v)b_{\del}(v)a_{\del}(s_1)\E_{\nu_{s_1,\del}}\left[w_1(v)(\exp(\eta\Del^2f(s_1,w_1))-1)\right]dv ds_1.\nonumber
\end{align}
Fix $s_1\in(0,t)$ and $v\in(s_1,t)$, then
\begin{align}\label{eqn:lem3-3}
&\E_{\nu_{s_1,\del}}\left[w_1(v)(\exp(\eta\Del^2f(s_1,w_1))-1)\right]\\
&\hskip10pt=\eta\Del^2\E_{\nu_{s_1,\del}}\left[w_1(v)\left(f+\eta\Del^2f^2/2!+
\sum_{j=2}^{\infty}(\eta\Del^2)^jf^{j+1}/(j+1)!\right)\right]\nonumber\\
&\hskip10pt=:\eta\Del^2\E_{\nu_{s_1,\del}}(T_1+T_2+T_3).\nonumber
\end{align}
Note that $\Del^{2j}\leq \Del^{j+2}$ for $j\geq 2$ and $f\leq\phi_{\infty}$. Hence from
Lemma \ref{lem:2}, we have
\begin{align}
\E_{\nu_{s_1,\del}}(T_3)&\leq \E_{\nu_{s_1,\del}}
\left[w_1(v)\sum_{j=2}^{\infty}(\eta\Del^2)^j\frac{(C_3w_1(T))^{j+1}}{\Del^{j+1}(j+1)!}\right]\nonumber\\
&\leq \E_{\nu_{s_1,\del}}
\left[w_1(T)\sum_{j=2}^{\infty}\eta^j\Del\frac{(C_3w_1(T))^{j+1}}{(j+1)!}\right]\nonumber\\
&\leq C_3^2\eta\Del\E_{\nu_{s_1,\del}}
\left[w_1(T)^3\sum_{j=2}^{\infty}\frac{(C_3\eta w_1(T))^{j-1}}{(j+1)!}\right]\nonumber\\
&\leq C_3^2\eta\Del\E_{\overline{\nu}_0}\left(w_1(T)^3\exp(C_3\eta w_1(T))\right)\nonumber\\
&\leq C_3^2\eta\Del\left(\E_{\overline{\nu}_0}(w_1(T)^6)\right)^{1/2}
\left(\E_{\overline{\nu}_0}(\exp(2C_3\eta w_1(T)))\right)^{1/2}.\nonumber
\end{align}
Since $w_1$ has an exponentially decaying tail (Lemma 6.7 in \cite{bhamidi}),
we can choose $\eta_1>0$ small so that $\E_{\overline{\nu}_0}(\exp(2C_3\eta_1 w_1(T))<\infty$
and for $0\leq\eta\leq\eta_1$
\begin{equation}\label{eqn:lem3-4}
\E_{\nu_{s_1,\del}}(T_3)\leq \Del\leq\frac{\Del}{4}\E_{\nu_{s_1,\del}}(w_1(v)f(s_1,w_1)),
\end{equation}
the last inequality follows by noting that for $v\in(s_1,t)$, $w_1(v)\geq 2$
and $f(s_1,w_1)\geq \phi(s_1,w_1)=w_1(t)\geq 2$ when $w_1$ is distributed as $\nu_{s_1,\del}$.

Using the bound $f(s_1,w_1)\leq C_3w_1(T)/\Del$, we get
\begin{align}\label{eqn:lem3-5}
\E_{\nu_{s_1,\del}}T_2&= \E_{\nu_{s_1,\del}}(w_1(v)\eta\Del^2f^2/2)\\
&\leq \frac{C_3\eta\Del}{2} \E_{\nu_{s_1,\del}}(w_1(T)^2f(s_1,w_1)).\nonumber
\end{align}
Suppose $f=f_{\ell}=\phi+\sum_{i=1}^{\ell}(1+\Del/2)^iK_{\del}^i\phi$.
Choose an $\eta_2>0$ so that $C_3\eta_2\E_{\overline{\nu}_0}(w_1(T)^3)\leq 2$, then for $0<\eta\leq\eta_2$,
\begin{align}\label{eqn:lem3-6}
\frac{C_3\eta\Del}{2}\E_{\nu_{s_1,\del}}(w_1(T)^2\phi(s_1,w_1))&\leq \frac{C_3\eta\Del}{2}\E_{\overline{\nu}_{0}}(w_1(T)^3)\\
&\leq \Del\leq \frac{\Del}{4}\E_{\nu_{s_1,\del}}(w_1(v)\phi(s_1,w_1)).\nonumber
\end{align}
For $i\geq 1$, let $g=K_{\del}^{i-1}\phi$. Then for $0<\eta\leq\eta_2$,
\begin{align}\label{eqn:lem3-7}
&\frac{C_3\eta\Del}{2}\E_{\nu_{s_1,\del}}(w_1(T)^2K_{\del}^i\phi(s_1,w_1))\\
&\hskip10pt=\frac{C_3\eta\Del}{2}\E_{\nu_{s_1,\del}}(w_1(T)^2K_{\del}g(s_1,w_1))\nonumber\\
&\hskip10pt=\frac{C_3\eta\Del}{2}
\E_{\nu_{s_1,\del}}\left[w_1(T)^2\int_{X_t}\left(\int_{z=s_1}^tw_1(z)w_2(z)b_{\del}(z)dz\right)g(s_2,w_2)\mu_{\del}(d(s_2,w_2))
\right]\nonumber\\
&\hskip10pt=\frac{C_3\eta\Del}{2} \int_{X_t}\left[\left(\int_{z=s_1}^t\E_{\nu_{s_1,\del}}
\left(w_1(T)^2w_1(z)\right)w_2(z)b_{\del}(z)dz\right)g(s_2,w_2)\right]\mu_{\del}(d(s_2,w_2))
\nonumber\\
&\hskip10pt\leq\frac{C_3\eta\Del}{2} \int_{X_t}
\left[\left(\int_{z=s_1}^t\E_{\overline{\nu_0}}\left(w_1(T)^3\right)w_2(z)b_{\del}(z)dz\right)g(s_2,w_2)\right]\mu_{\del}(d(s_2,w_2))
\nonumber\\
&\hskip10pt\leq\Del \int_{X_t}
\left[\left(\int_{z=s_1}^t w_2(z)b_{\del}(z)dz\right)g(s_2,w_2)\right]\mu_{\del}(d(s_2,w_2))
\nonumber\\
&\hskip10pt\leq\frac{\Del}{4} \int_{X_t}
\left[\left(\int_{z=s_1}^t\E_{\nu_{s_1,\del}}\left(w_1(v)w_1(z)\right)w_2(z)b_{\del}(z)dz\right)g(s_2,w_2)\right]\mu_{\del}(d(s_2,w_2))
\nonumber\\
&\hskip10pt=\frac{\Del}{4}
\E_{\nu_{s_1,\del}}\left[w_1(v)\int_{X_t}k_{\del}((s_1,w_1),(s_2,w_2))g(s_2,w_2)\mu_{\del}(d(s_2,w_2))\right]
\nonumber\\
&\hskip10pt=\frac{\Del}{4} \E_{\nu_{s_1,\del}}\left(w_1(v)K_{\del}^i\phi(s_1,w_1)\right).
\nonumber
\end{align}
From \eqref{eqn:lem3-5}, \eqref{eqn:lem3-6} and \eqref{eqn:lem3-7}, we get
\begin{equation}\label{eqn:lem3-8}
\E_{\nu_{s_1,\del}}T_2\leq\frac{\Del}{4} \E_{\nu_{s_1,\del}}\left(w_1(v)f(s_1,w_1)\right).
\end{equation}
From \eqref{eqn:lem3-3}, \eqref{eqn:lem3-4} and \eqref{eqn:lem3-8}, we have
\begin{align}\label{eqn:lem3-9}
\E_{\nu_{s_1,\del}}\left[w_1(v)(\exp(\eta\Del^2f(s_1,w_1))-1)\right]
\leq \eta\Del^2(1+\frac{\Del}{2})\E_{\nu_{s_1,\del}}(w_1(v)f(s_1,w_1)).
\end{align}
From \eqref{eqn:lem3-9} and \eqref{eqn:lem3-2}, we get
\begin{align}
&\int_{X_t}k_{\del}(x,u)(\exp(\eta\Del^2f(u))-1)\mu_{\del}(du)\\
&\hskip10pt\leq\eta\Del^2(1+\frac{\Del}{2})
\int_{s_1=0}^t\int_{v=s_1}^t w(v)b_{\del}(v)a_{\del}(s_1)\E_{\nu_{s_1,\del}}\left[w_1(v)f(s_1,w_1)\right]dv ds_1\nonumber\\
&\hskip10pt=\eta\Del^2(1+\frac{\Del}{2})
\int_{s_1=0}^t a_{\del}(s_1)\E_{\nu_{s_1,\del}}\left[k_{\del}(x,(s_1,w_1))f(s_1,w_1)\right] ds_1\nonumber\\
&\hskip10pt= \eta\Del^2(1+\Del/2)K_{\del}f(x),\nonumber
\end{align}
for $0<\eta\leq \eta_0:=\eta_1\wedge\eta_2$. This together with \eqref{eqn:lem3-1} yields the result.
\end{proof}
Now the proof of Theorem \ref{thm:continuous time BF} becomes routine and
can be finished in the same way as in Lemma 6.15 in \cite{bhamidi}.
We include the short argument.

Denote by $G_i:=\sum_{j=1}^{N_i}\phi(x_j^{(i)})$, the total volume of points
in generation $k$ and let $\mathcal{F}_k$ be the $\sigma$-field generated
by $\{x_i^{(j)}:\ i\leq N_j, j\leq k\}$. We make the following
\begin{claim*} For $0<\eta\leq\eta_0$,
$$\E\left(\exp\left(\eta\Del^2\sum_{i=j}^k G_i\right)|\mathcal{F}_j\right)
\leq \exp\left(\eta\Del^2\sum_{i=1}^{N_j}f_{k-j}(x_i^{(j)})\right).$$
\end{claim*}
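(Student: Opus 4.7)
The plan is to prove the claim by backward induction on the gap $m:=k-j\geq 0$, with the previous Lemma \ref{lem:3} supplying the one-step estimate and the algebraic identity
\[
\phi_t + (1+\Del/2)\,K_\del f_{m-1} \;=\; f_m
\]
(immediate from the telescoping definition of $f_\ell$) making the induction close.

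\textbf{Base case.} When $m=0$, the inner sum reduces to $G_j$, which is $\mathcal{F}_j$-measurable, so the left side equals $\exp(\eta\Del^2 G_j)=\exp(\eta\Del^2\sum_{i=1}^{N_j}\phi_t(x_i^{(j)}))$, matching the right side since $f_0=\phi_t$.

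\textbf{Inductive step.} Assume the bound holds for $m-1$, applied with $j+1$ in place of $j$. I would first pull out the $\mathcal{F}_j$-measurable factor $\exp(\eta\Del^2 G_j)$ and then condition on $\mathcal{F}_{j+1}$:
\begin{align*}
\E\!\left(\exp\!\left(\eta\Del^2\!\!\sum_{i=j}^k G_i\right)\Big|\mathcal{F}_j\right)
=\exp(\eta\Del^2 G_j)\,\E\!\left[\E\!\left(\exp\!\left(\eta\Del^2\!\!\sum_{i=j+1}^k G_i\right)\Big|\mathcal{F}_{j+1}\right)\Big|\mathcal{F}_j\right].
\end{align*}
The induction hypothesis bounds the inner conditional expectation by $\exp(\eta\Del^2\sum_{\ell=1}^{N_{j+1}}f_{m-1}(x_\ell^{(j+1)}))$.

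\textbf{Applying Lemma \ref{lem:3}.} Conditional on $\mathcal{F}_j$, the offspring of the distinct points $x_1^{(j)},\ldots,x_{N_j}^{(j)}$ form independent Poisson processes with intensities $k_\del(x_\ell^{(j)},\cdot)\mu_\del(\cdot)$. Hence the outer conditional expectation factorizes as a product over $\ell\leq N_j$, and each factor is bounded via Lemma \ref{lem:3} (with $f=f_{m-1}$, which lies in the admissible class $\{f_\ell:\ell\geq 0\}$) by
\[
\exp\!\bigl(\eta\Del^2(1+\Del/2)\,K_\del f_{m-1}(x_\ell^{(j)})\bigr).
\]
Combining everything, the expression is at most
\[
\exp\!\left(\eta\Del^2\sum_{\ell=1}^{N_j}\bigl[\phi_t(x_\ell^{(j)})+(1+\Del/2)\,K_\del f_{m-1}(x_\ell^{(j)})\bigr]\right).
\]

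\textbf{Closing the induction.} The bracketed expression equals $f_m(x_\ell^{(j)})$ by the identity above, giving the desired bound. The only nontrivial point is the verification that $f_{m-1}$ indeed belongs to the class for which Lemma \ref{lem:3} applies and that $\eta\leq \eta_0$ is preserved across generations; both are automatic from how $f_\ell$ is defined and from the fact that $\eta_0$ does not depend on the point $x$. There is no real obstacle — the work was done in Lemmas \ref{lem:2}--\ref{lem:3}, which together with the telescoping definition of $f_\ell$ make this conditional-expectation computation routine.
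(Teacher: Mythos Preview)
Your proof is correct and follows essentially the same argument as the paper: backward induction on $j$ (equivalently on $m=k-j$), peeling off the $\mathcal{F}_j$-measurable factor $\exp(\eta\Del^2 G_j)$, applying the tower property and the induction hypothesis at level $j+1$, then using Lemma~\ref{lem:3} together with the independence of offspring across parents, and closing with the identity $\phi_t+(1+\Del/2)K_\del f_{m-1}=f_m$. The only difference is cosmetic---you make the telescoping identity and the factorization over parents explicit, whereas the paper leaves these implicit.
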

\begin{proof}[Proof of claim]
The assertion is immediate for $j=k$. Assume it is true for $\ell+1\leq j\leq k$. Then
\begin{align}
&\E\left(\exp\left(\eta\Del^2\sum_{i=\ell}^k G_i\right)|\mathcal{F}_{\ell}\right)\nonumber\\
&=\exp(\eta\Del^2G_{\ell})\E\left[\E\left(\exp\left(\eta\Del^2\sum_{i=\ell+1}^k G_i\right)|\mathcal{F}_{\ell+1}\right)|\mathcal{F}_{\ell}\right]\nonumber\\
&\leq \exp(\eta\Del^2G_{\ell})
\E\left[\exp\left(\eta\Del^2\sum_{i=1}^{N_{\ell+1}}f_{k-\ell-1}(x_i^{(\ell+1)})\right)|\mathcal{F}_{\ell}\right]
\text{ (induction hypothesis)}\nonumber\\
&\leq\exp(\eta\Del^2G_{\ell})\exp\left(\eta\Del^2(1+\Del/2)\sum_{i=1}^{N_{\ell}}K_{\del}f_{k-\ell-1}(x_i^{(\ell)})\right)
\text{ (Lemma \ref{lem:3})}\nonumber\\
&=\exp\left(\eta\Del^2\sum_{i=1}^{N_{\ell}}f_{k-\ell}(x_i^{(\ell)})\right).\nonumber
\end{align}
\end{proof}
Setting $j=0$ and letting $k$ tend to infinity in this inequality, we get
\begin{align}
\E_{w_0}(\exp(\eta\Del^2G(x_0)))&\leq \exp(\eta\Del^2\phi_{\infty}((0,w_0)))\nonumber\\
&\leq\exp(\eta\Del C_3w_0(T))\leq\exp(C_3\eta w_0(T)).\nonumber
\end{align}
Hence we have
\begin{align}\label{eqn:7}
\E_{\nu_{0,\del}}\PR_{w_0}(G(x_0)>m))
&\leq \exp(-\eta_0\Del^2m)\E_{\nu_{0,\del}}\left[\E_{w_0}(\exp(\eta\Del^2G(x_0)))\right]\\
&\leq \exp(-\eta_0\Del^2m)\E_{\overline{\nu}_0}(\exp(C_3\eta w_0(T))\nonumber\\
&=C_4\exp(-\eta_0\Del^2m)\leq C_4\exp(-\eta_0\beta^2(t_c-t)^2m) ,\nonumber
\end{align}
since $\Delta\geq\beta(t_c-t)$. The constant $C_4$ is finite by the choice of $\eta_0$.

From \eqref{eqn:7} and \eqref{eqn:5}, we see that
$$\PR\left(L_1^{BF}(t)>\frac{2\log n}{\eta_0\beta^2(t_c-t)^2}\right)\stackrel{n\to\infty}{\longrightarrow} 0,$$
this completes the proof of Theorem \ref{thm:continuous time BF}.
\subsection{Proof of Lemma \ref{lem:BF norm difference}}\label{sec:Proof of Lemma for BF}
Throughout this section we shall write $a(u)$ (resp. $a_{\del}(u)$), $b(u)$ (resp. $b_{\del}(u)$)
and $c(u)$ (resp. $c_{\del}(u)$)
to mean $a_0(x(u))$ (resp. $a_{n,\del}(x(u))$), $b_0(x(u))$ (resp. $b_{n,\del}(x(u))$)
and $c_0(x(u))$ (resp. $c_{n,\del}(x(u))$). We write $\mu$, $k_u$ for $\mu(a,b,c)$ and
$k_u(a,b,c)$
and recall that $\mu_{\del}$ and $\overline{\mu}$ stand for $\mu(a_{\del},b_{\del},c_{\del})$
and $\mu(1,1,1)$ respectively.

Then
\begin{align}\label{eqn:lem1-1}
&|\rho_u(a,b,c)-\rho_u(a_{\del},b_{\del},c_{\del})|\\
&\hskip10pt
\leq |\rho_u(a,b,c)-\rho_u(a_{\del},b,c_{\del})|
+|\rho_u(a_{\del},b,c_{\del})-\rho_u(a_{\del},b_{\del},c_{\del})|\nonumber\\
&\hskip10pt=:T_1+T_2.\nonumber
\end{align}
From Lemma 6.19 of \cite{bhamidi}, it follows that
$\mu<<\mu_{\del}$ and for $x=(s,w)\in X$
$$f(x):=\frac{d\mu}{d{\mu}_{\del}}(x)=\frac{a(s)}{a_{\del}(s)}\exp\left(\int_{s}^Tw(u)(c_{\del}(u)-c(u))du\right)
\prod_{i=1}^{w(T)-2}\left(\frac{c(\gamma_i)}{c_{\del}(\gamma_i)}\right),$$
where $\gamma_i=\gamma_i(s,w)$ is the time of the $i$th jump of $w$ after time $s$.

Since $\rho(k_u(x,y),\mu)=\rho(k_u(x,y)\sqrt{f(x)f(y)},\mu_{\del})$, we have
\begin{align}\label{eqn:lem1-2}
T_1&=|\rho\left(k_u(x,y)\sqrt{f(x)f(y)},\mu_{\del}\right)-\rho(k_u(x,y),\mu_{\del})|\\
&\leq \rho\left(k_u(x,y)\left(\sqrt{f(x)f(y)}-1\right),\mu_{\del}\right)\nonumber\\
&\leq \sqrt{\int_X\int_X k_u^2(x,y)\left(\sqrt{f(x)f(y)}-1\right)^2\mu_{\del}(dx)\mu_{\del}(dy)}.\nonumber
\end{align}
Writing $x=(s_1,w_1)$, $y=(s_2,w_2)$ and $\tau_i=\gamma_i(x)$, $\xi_j=\gamma_j(y)$ we have
$L\leq \sqrt{f(x)f(y)}\leq U$, where
$$L:=\left(\frac{a(s_1)}{a_{\del}(s_1)}\frac{a(s_2)}{a_{\del}(s_2)}
\prod_{i=1}^{w_1(T)-2}\frac{c(\tau_i)}{c_{\del}(\tau_i)}
\prod_{j=1}^{w_2(T)-2}\frac{c(\xi_j)}{c_{\del}(\xi_j)}\right)^{1/2}$$
and
$$U:=\exp\left(\frac{\del T}{2}(w_1(T)+w_2(T))\right).$$
From \eqref{eqn:bound on k} and \eqref{eqn:lem1-2},
\begin{align}\label{eqn:lem1-3}
T_1&\leq C\sqrt{\int_X\int_X w_1(T)^2w_2(T)^2\left[(U-1)^2+(L-1)^2\right]\mu_{\del}(dx)\mu_{\del}(dy)}\\
&\leq C\sqrt{\int_X\int_X w_1(T)^2w_2(T)^2(U-1)^2\mu_{\del}(dx)\mu_{\del}(dy)}\nonumber\\
&\hskip10pt+ C\sqrt{\int_X\int_X w_1(T)^2w_2(T)^2(L-1)^2\mu_{\del}(dx)\mu_{\del}(dy)}\nonumber\\
&\hskip10pt=:C(T_3+T_4).\nonumber
\end{align}
Note that
\begin{align}
(U-1)^2&\leq 2\left(\exp\left(\frac{\del Tw_1(T)}{2}\right)-1\right)^2\exp\left(\del Tw_2(T)\right)\nonumber\\
&\hskip20pt +2\left(\exp\left(\frac{\del Tw_2(T)}{2}\right)-1\right)^2\exp\left(\del Tw_1(T)\right).\nonumber
\end{align}
This together with the simple inequality $e^x-1\leq xe^x$ yields
\begin{align}\label{eqn:lem1-4}
T_3^2&\leq 4\int\int w_1^2(T)w_2^2(T)
\left(\exp\left(\frac{\del Tw_1(T)}{2}\right)-1\right)^2\exp\left(\del Tw_2(T)\right)\mu_{\del}(dx)\mu_{\del}(dy)\\
&\leq C\del^2\left[\int w_1^4(T)\exp\left(\del Tw_1(T)\right)\mu_{\del}(dx)\right]
\left[\int w_2^2(T)\exp\left(\del Tw_2(T)\right)\mu_{\del}(dy)\right]\nonumber\\
&\leq C'\del^2\E_{\overline{\nu}_0}\left[w_1^4(T)\exp\left(\del Tw_1(T)\right)\right]
\E_{\overline{\nu}_0}\left[w_2^2(T)\exp\left(\del Tw_2(T)\right)\right].\nonumber
\end{align}
Since the tail of $w_1(T)$ decays exponentially, we conclude that for large $n$
(so that $\del=\del_n$ is sufficiently small)
\begin{equation}\label{eqn:T3 term}
T_3\leq C\del.
\end{equation}
Next
\begin{align}\label{eqn:T4 term decomposition}
T_4^2\leq C\int\int w_1^2(T)w_2^2(T)(L_1+L_2+L_3+L_4)\mu_{\del}(dx)\mu_{\del}(dy),
\end{align}
where
\begin{align}
L_1=\left(1-\sqrt{\frac{a(s_1)}{a_{\del}(s_1)}}\right)^2, &
L_2=\left(1-\sqrt{\frac{a(s_2)}{a_{\del}(s_2)}}\right)^2, \nonumber\\
L_3=\left(1-\sqrt{\prod_{i=1}^{w_1(T)-2}\frac{c(\tau_i)}{c_{\del}(\tau_i)}}\right)^2, &
L_4=\left(1-\sqrt{\prod_{j=1}^{w_2(T)-2}\frac{c(\xi_j)}{c_{\del}(\xi_j)}}\right)^2 \nonumber.
\end{align}
Note that $\inf_{[0,T]}a(s)=m_1>0$ and $0\leq a_{\del}(s)-a(s)\leq \del$.
Hence,
\begin{align}\label{eqn:L1 term bound}
&\int\int w_1^2(T)w_2^2(T)L_1\mu_{\del}(dx)\mu_{\del}(dy)\\
&\hskip15pt\leq \int\int w_1^2(T)w_2^2(T)\left(1-\frac{a(s_1)}{a_{\del}(s_1)}\right)^2\mu_{\del}(dx)\mu_{\del}(dy)\nonumber\\
&\hskip15pt\leq \frac{\del^2}{m_1^2}\int\int w_1^2(T)w_2^2(T)\mu_{\del}(dx)\mu_{\del}(dy)
\leq C\del^2\left(\E_{\overline{\nu}_0}w_1^2(T)\right)^2.\nonumber
\end{align}
The integrand corresponding to $L_2$ can be handled in the same way.

Now $\inf_{[0,\eps]}(c(t))'=\inf_{[0,\eps]}c_0'(x(t))x'(t)>0$ (see \cite{spencer}) for
some small $\eps>0$
and hence
\begin{equation}\label{eqn:cdel lower bound1}
c_{\del}(t)\geq \max(\del,m_2t)\text{ for }t\in[0,T]
\end{equation}
for a positive constant $m_2$.
 Further, $c(t)$ is increasing in an
interval $[0,t_0]$ and is bounded away from zero on $[t_0,T]$. Hence
\begin{equation}\label{eqn:cdel lower bound2}
c_{\del}(\tau_i)\geq \min(c_{\del}(\tau_1),m_3)\text{ for }1\leq i\leq w_1(T)-2.
\end{equation}
Hence, on the set $w_1(T)\geq 3$,
\begin{align}
L_3&\leq w_1(T)\sum_{i=1}^{w_1(T)-2}\left(1-\sqrt{\frac{c(\tau_i)}{c_{\del}(\tau_i)}}\right)^2\nonumber\\
&\leq w_1(T)\sum_{i=1}^{w_1(T)-2}\left(1-\frac{c(\tau_i)}{c_{\del}(\tau_i)}\right)^2\nonumber\\
&\leq w_1(T)\sum_{i=1}^{w_1(T)-2}\frac{\del^2}{c_{\del}^2(\tau_i)}\nonumber\\
&\leq w_1(T)\del^{2}\sum_{i=1}^{w_1(T)-2}\left(\frac{1}{m_3}+\frac{1}{c_{\del}(\tau_1)}\right)^{2}
\text{ (from \eqref{eqn:cdel lower bound2})}\nonumber\\
&\leq C\del^{2}w_1(T)^2\frac{1}{c_{\del}(\tau_1)^{2}}.\nonumber
\end{align}
Hence,
\begin{align}\label{eqn:L3bound 1}
&\int\int w_1^2(T)w_2^2(T)L_3\mu_{\del}(dx)\mu_{\del}(dy)\\
&\hskip10pt\leq C\del^{2}
\int\int w_2^2(T)\frac{w_1^4(T)}{c_{\del}(\tau_1)^{2}}\I_{\{w_1(T)\geq3\}}\mu_{\del}(dx)\mu_{\del}(dy)\nonumber\\
&\hskip10pt\leq C'\del^{2}\int \frac{w_1^4(T)}{c_{\del}(\tau_1)^{2}}\I_{\{w_1(T)\geq3\}}\mu_{\del}(dx)\nonumber\\
&\hskip10pt\leq C'\del^{2}\left(\int w_1^{4p}(T)\mu_{\del}(dx)\right)^{1/p}
\left(\int \frac{1}{c_{\del}(\tau_1)^{2q}}\I_{\{w_1(T)\geq3\}}\mu_{\del}(dx)\right)^{1/q}\nonumber\\
&\hskip10pt\leq C''\del^{2}
\left(\int \frac{1}{c_{\del}(\tau_1)^{2q}}\I_{\{w_1(T)\geq3\}}\mu_{\del}(dx)\right)^{1/q}.\nonumber
\end{align}
Here we choose $p,q>1$ so that $p^{-1}+q^{-1}=1$ and $2q=2+\theta$ with $0<\theta<1$.
Define
\begin{align}
\overline{\tau}_1=
\left\{
\begin{array}{l}
\tau_1, \text{ if }w_1(T)\geq 3,\\
T,\text{ if }w_1(T)=2.
\end{array}
\right.\nonumber
\end{align}
Then
\begin{align}\label{eqn:L3bound 2}
&\int \frac{1}{c_{\del}(\tau_1)^{2q}}\I_{\{w_1(T)\geq3\}}\mu_{\del}(dx)\\
&\hskip10pt\leq \int \frac{1}{c_{\del}(\overline{\tau}_1)^{2+\theta}}\mu_{\del}(dx)\nonumber\\
&\hskip10pt=\int_{s_1=0}^T a_{\del}(s_1)ds_1\E_{\nu_{s_1,\del}}\left[\frac{1}{c_{\del}(\overline{\tau}_1)^{2+\theta}}\right]\nonumber\\
&\hskip10pt\leq\int_{s_1=0}^T ds_1
\left[\int_{u=s_1}^T\frac{1}{c_{\del}(u)^{2+\theta}}
\exp\left(-2\int_{s_1}^u c_{\del}(z)dz\right)2c_{\del}(u)du+\frac{\nu_{s_1,\del}\{w_1(T)=2\}}{c_{\del}(T)^{2+\theta}}\right]\nonumber\\
&\hskip20pt\leq 2\int_{s_1=0}^T ds_1
\left[\int_{u=s_1}^T\frac{du}{(m_2u)^{1+\theta}}+\frac{1}{(m_2T)^{2+\theta}}\right].\nonumber
\end{align}
The last inequality is a consequence of \eqref{eqn:cdel lower bound1}. Since $\theta<1$, the
last integral is finite. A similar analysis can be carried out for the integrand
corresponding to $L_4$. Combining \eqref{eqn:T3 term}, \eqref{eqn:L1 term bound}, \eqref{eqn:L3bound 1}
and \eqref{eqn:L3bound 2}, we get
\begin{equation}\label{eqn:T1 bound}
T_1\leq C\del.
\end{equation}
Finally,
\begin{align}\label{eqn:T2 bound}
T_2^2&\leq \int_X\int_X\left(k_{u,\del}(x,y)-k_u(x,y)\right)^2\mu_{\del}(dx)\mu_{\del}(dy)\\
&\leq \int_X\int_X\left(\int_0^T w_1(z)w_2(z)(b_{\del}(z)-b(z)) dz\right)^2\mu_{\del}(dx)\mu_{\del}(dy)\nonumber\\
&\leq C\del^2\left(\int w_1^2(T)\overline{\mu}(dx)\right)^2=C'\del^2.\nonumber
\end{align}
From \eqref{eqn:T1 bound} and \eqref{eqn:T2 bound}, we get
$$|\rho_u(a,b,c)-\rho_u(a_{\del},b_{\del},c_{\del})|\leq C\del$$
which is the desired bound.
\subsection{Proof of Corollary \ref{cor:discrete time BF}}
Let $X_n(s)$ denote the number of edges in $BF_n(s)$. Define
a process $\overline{BF}_n(\cdot)$ by $\overline{BF}_n(2X_n(s)/n):=BF_n(s)$ for $s\geq 0$
and extend the definition to $\R_+$ by right continuity. Then $\overline{BF}_n(\cdot)$
has the same distribution as $DBF_n(\cdot)$. Let $L_1^{\overline{BF}}(s)$ denote
the size of the largest component of $\overline{BF}_n(s)$.
Let us assume that $t_c/2\leq t\leq t_c-\lambda n^{-1/3}$, since for $t\leq t_c/2$
the desired bound will follow directly. We have
\begin{align*}
\PR\left(L_1^{BF}\left(t+\frac{\log n}{\sqrt{n}}\right)\geq m\right)
&\geq \PR\left(L_1^{BF}\left(t+\frac{\log n}{\sqrt{n}}\right)\geq m,\
X_n\left(t+\frac{\log n}{\sqrt{n}}\right)\geq \frac{nt}{2}\right)\\
&\geq \PR\left(L_1^{\overline{BF}}(t)\geq m,\
 X_n\left(t+\frac{\log n}{\sqrt{n}}\right)\geq \frac{nt}{2}\right).
\end{align*}
Hence
\begin{align}\label{eqn:DBF decomposition}
\PR(L_1^{\overline{BF}}(t)\geq m)\leq
\PR(L_1^{BF}(t+\log n/\sqrt{n})\geq m)+
\PR( X_n(t+\log n/\sqrt{n})< nt/2).
\end{align}

Let $Z_1,\hdots,Z_n$ be i.i.d. Poisson random variables with mean $\mu_n:=\frac{1}{2}(1-1/n)^2(t+\log n/\sqrt{n})$.
Then $X_n(t+\log n/\sqrt{n})\stackrel{d}{=}Z_1+\hdots+Z_n$. Since $nt/2-n\mu_n=-\sqrt{n}\log n/2+O(1)$, we have
\begin{align}\label{eqn:BE}
\PR(X_n(t+\log n/\sqrt{n})< nt/2)&\leq \PR(\sum_{j=1}^n(Z_j-\mu_n)/\sqrt{n\mu_n}<-C\log n)\\
&\leq \Phi(-C\log n)+C'/\sqrt{n},\nonumber
\end{align}
the last inequality being a consequence of the classical Berry-Ess\'{e}en theorem (\cite{berry-esseen}).
The result now follows from \eqref{eqn:DBF decomposition}, \eqref{eqn:BE} and Theorem \ref{thm:continuous time BF}.
\section{Proofs for general bounded size rules}\label{sec:proofs1}
As in the case of the Bohman-Frieze process,
the proofs for general bounded size rules also proceed through the analysis of an inhomogeneous
random graph model. We start with some definitions.

Fix $K\geq 2$ and $F\subset\Omega_K^4$. Consider the bounded size rule associated with $F$.
For $\vec{j}=(j_1,j_2,j_3,j_4)\in F$ and $i\in\Omega_K$, let
\begin{align*}
\Delta(\vec{j},i)=
\left\{
\begin{array}{l}
\frac{i}{2}\cdot(\I\{j_1+j_2=i\}-\I\{j_1=i\}-\I\{j_2=i\}), \text{ if }1\leq i\leq K,\\
\frac{1}{2}\cdot\I\{j_1+j_2>K\}\cdot(j_1\I\{j_1\leq K\}+j_2\I\{j_2\leq K\}),\text{ if }i=\omega.
\end{array}
\right.
\end{align*}
When $\vec{j}\in F^c$, we replace $j_1,j_2$ in the above expressions by $j_3,j_4$ respectively
and define $\Delta(\vec{j},i)$ in the same way. For the motivation behind this definition,
see \cite{spencer}. Consider now the system of ODEs
\begin{equation}\label{eqn:ODE}
\overline{x}_i'(v)=\sum_{\vec{j}\in\Omega_K^4}\Delta(\vec{j},i)
\overline{x}_{j_1}(v)\overline{x}_{j_2}(v)\overline{x}_{j_3}(v)\overline{x}_{j_4}(v),\ i\in\Omega_K
\end{equation}
with the initial condition $\overline{x}_{i}(0)=\I\{i=1\}$.
Spencer and Wormald \cite{spencer} showed that this system has a solution
$\{x_i(\cdot):\ i\in\Omega_K\}$ defined for $v\geq 0$. Further, the proportion
of vertices of $DBSR_n(v)$ in components of size $i$ converges to $x_i(v)$
in probability. For $i_1,i_2\in\Omega_K$, we define the functions
$F_{i_1,i_2}:[0,1]^{K+1}\to\R$ by
\begin{equation}\label{eqn:definition F}
F_{i_1,i_2}(x_1,\hdots,x_K,x_{\omega})=\frac{1}{2}
\big(\sum_{\substack{\vec{j}\in F:\\ \{j_1,j_2\}=\{i_1,i_2\}}}
x_{j_1}x_{j_2}x_{j_3}x_{j_4}+\sum_{\substack{\vec{j}\in F^c:\\ \{j_3,j_4\}=\{i_1,i_2\}}}
x_{j_1}x_{j_2}x_{j_3}x_{j_4}\big).
\end{equation}

We now define some rate functions which are analogous to the functions $a_0(x(\cdot)), b_0(x(\cdot))$
and $c_0(x(\cdot))$ appearing in the proof of Theorem \ref{thm:continuous time BF}.
For $i\in \{1,\hdots,K\}$, let
\begin{align}\label{eqn:definition a}
a_i(v)=\sum_{\substack{1\leq i_1\leq i_2\leq K,\\i_1+i_2=K+i}}F_{i_1,i_2}(x_1(v),\hdots,x_K(v),x_{\omega}(v)),\
c_i(v)=\frac{1}{x_{\omega}(v)} F_{i,\omega}(x_1(v),\hdots,x_K(v),x_{\omega}(v)),
\end{align}
and let
\begin{align}\label{eqn:definition b}
b(v)= F_{\omega,\omega}(x_1(v),\hdots,x_K(v),x_{\omega}(v))/x_{\omega}^2(v).
\end{align}

Let $BSR_n^{\ast}(v)$ be the subgraph of $BSR_n(v)$ consisting of
those components of $BSR_n(v)$ which contain at least $K+1$ vertices.
Then, for $1\leq i\leq K$, $na_i(v)$ approximates the rate at which a component of size $K+i$
appears in $BSR_n^{\ast}(v)$, i.e. the rate at which two components of sizes smaller than
$K+1$ merge to form a component of size $K+i$.
Given two vertices in $BSR_n^{\ast}(v)$, $b(v)/n$ approximates the rate
at which these two vertices are joined by an edge. Given a
vertex in $BSR_n^{\ast}(v)$ and $1\leq i\leq K$,
$c_i(v)$ approximates the rate at which a component of size $i$
gets linked to this vertex by an edge.
For the actual computations leading to these formulas for the
rate functions, see \cite{bhamidi2}.

Next, fix $\gamma\in(0,1/2)$ and let $\del=\del_n:=n^{-\gamma}$. Define,
$a_{i,\del}(v)=a_i(v)+\del_n$ and $c_{i,\del}(v)=c_i(v)+\del_n$ for $1\leq i\leq K$.
Similarly, define $b_{\del}(v)=b(v)+\del_n$.
Since $x_i(t)\geq 0$ and $\sum_{i\in\Omega_K}x_i(t)=1$ for $t\geq 0$ (see \cite{spencer}),
it follows that $\sup\{a_{i,\del}(v):\ 1\leq i\leq K, v\in[0,T]\}\leq 1$
for large $n$ and similar inequalities are true for $\{c_{i,\del}\}_{i\leq K}$ and $b_{\del}$.
Let $t_c$ be the critical time
for the bounded size rule. Let $T=2t_c$ and $X=[0,T]\times W$
where $W=D([0,T]:\Z_{\geq 0})$ is equipped with the Skorohod topology.
For $1\leq i\leq K$ and $s\in[0,T]$, let $\nu_{i,s;\del}$ be the law of the Markov process
$Z(\cdot)$ in $W$ which satisfies $Z(u)=0$ for $0\leq u<s$ and $Z(s)=K+i$ and
conditional on $\{Z(u)\}_{u\leq s_1}$, the process makes a positive jump of
size $j$ in $(s_1,s_1+ds_1]$ with rate $Z(s_1)c_{j,\del}(s_1)$ for $1\leq j\leq K$.
The jumps of $Z(\cdot)$ happen independently across $j$.
Let $\mu_{\del}$ be the measure on $X$ given by
$$\mu_{\del}(d(s,w))=\sum_{i=1}^K a_{i,\del}(s)ds\ \nu_{i,s;\del}(dw).$$
For $v\in[0,T]$, define the kernel $k_{v,\del}$ on $X\times X$ by
$k_{v,\del}((s_1,w_1),(s_2,w_2)):=\int_0^v w_1(u)w_2(u)b_{\del}(u)du$.
Let $\phi_v:X\to\R$ be the weight function given by $\phi_v(s,w)=w(v)$.
As in Section \ref{sec:connection}, we construct an IRG model
$RG_{n,v}=RG_{n,v}(\{a_{i,\del}\}_{i\leq K},b_{\del},\{c_{i,\del}\}_{i\leq K})$
associated with $(X,\mu_{\del},k_{v,\del},\phi_v)$.

In general, for any nonnegative continuous functions $\{\overline{a}_i\}_{1\leq i\leq K}$,
$\overline{b}$ and $\{\overline{c}_i\}_{1\leq i\leq K}$ and $v\in[0,T]$, we denote by
$k_v(\{\overline{a}_i\}_{i\leq K},\overline{b},\{\overline{c}_i\}_{i\leq K})$
(resp. $\mu(\{\overline{a}_i\}_{i\leq K},\overline{b},\{\overline{c}_i\}_{i\leq K})$),
the kernel (resp. the measure) on $X$ constructed in a manner similar to the construction
of $k_{v,\del}$ (resp. $\mu_{\del}$) using the functions $\{\overline{a}_i\}_{i\leq K},
\overline{b},\{\overline{c}_i\}_{i\leq K}$ (thus $k_{v,\del}=k_v(\{a_{i,\del}\}_{i\leq K},
b_{\del},\{c_{i,\del}\}_{i\leq K})$).
We define
$$\rho_v(\{\overline{a}_i\}_{i\leq K},\overline{b},\{\overline{c}_i\}_{i\leq K})
:=\rho(k_v(\{\overline{a}_i\}_{i\leq K},\overline{b},\{\overline{c}_i\}_{i\leq K})
,\mu(\{\overline{a}_i\}_{i\leq K},\overline{b},\{\overline{c}_i\}_{i\leq K})).$$
We shall simply write $\rho_{v,\del}$ for $\rho_v(\{a_{i,\del}\}_{i\leq K},
b_{\del},\{c_{i,\del}\}_{i\leq K})$ and
$\rho_{v}$ for $\rho_v(\{a_{i}\}_{i\leq K},
b,\{c_{i}\}_{i\leq K})$.

For $1\leq i\leq K$, let
$$m_{a_i}=\min\{k\in\Z_{\geq 0}:\ \frac{d^k a_i}{dt^k}(0)\neq 0\}.$$
Since $a_i$ is real analytic around zero and $x_j(t)>0$ for $t>0$
and $j\in\Omega_K$ (Theorem 2.1 in \cite{spencer}), $m_{a_i}$ is necessarily finite.
Define $m_{c_i}$ for $1\leq i\leq K$ similarly. Let
$$\alpha:=\frac{1}{2}\min\{1+\frac{1}{m_{a_i}},1+\frac{2}{m_{c_i}}:\ 1\leq i\leq K\}.$$
The following Lemma is the analogue of Lemma \ref{lem:BF norm difference}.
\begin{lemma}\label{lem:BSR norm difference}
Let $\alpha$ be as above. Then for any $\alpha'\in(0,\alpha)$ we have
$$|\rho_{v,\del}-\rho_v|\leq C\del^{\alpha'},\text{ for every }v\in[0,T].$$
The constant $C$ depends only on $\alpha'$.
\end{lemma}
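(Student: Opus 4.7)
The plan is to mirror the proof of Lemma \ref{lem:BF norm difference} while carefully tracking how the estimates degrade when the rate functions $a_i$ and $c_i$ vanish to high order at $t=0$. First I would decompose
$$|\rho_{v,\delta}-\rho_v|\leq T_1+T_2,$$
where $T_1$ captures the change of measure $\mu\to\mu_\delta$ (through $a_i\to a_{i,\delta}$ and $c_i\to c_{i,\delta}$) and $T_2$ captures the change of kernel $b\to b_\delta$. Since $b_\delta-b=\delta$, the same calculation as in \eqref{eqn:T2 bound} gives $T_2\leq C\delta$; as $\alpha<1$ in the worst case, this will be the sub-dominant contribution.

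For $T_1$ I would first compute the Radon-Nikodym derivative $f=d\mu/d\mu_\delta$. A Girsanov-type computation for the inhomogeneous jump process defining $\nu_{i,s;\delta}$, combined with the identification $i_0(w):=w(s)-K$ of the initial component size, yields
$$f(s,w)=\frac{a_{i_0(w)}(s)}{a_{i_0(w),\delta}(s)}\exp\left(\sum_{j=1}^{K}\int_s^T w(u)\bigl(c_{j,\delta}(u)-c_j(u)\bigr)du\right)\prod_\ell \frac{c_{j(\ell)}(\gamma_\ell)}{c_{j(\ell),\delta}(\gamma_\ell)},$$
where the product is over jumps of $w$ in $(s,T]$ with $\gamma_\ell$ the time and $j(\ell)\in\{1,\ldots,K\}$ the size of the $\ell$-th jump. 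As in \eqref{eqn:lem1-3} I would then sandwich $L\leq \sqrt{f(x)f(y)}\leq U$ with $U$ coming from the exponential factor, and split the integrand into $(U-1)^2$ and $(L-1)^2$ parts. The $(U-1)^2$ piece is handled exactly as in \eqref{eqn:lem1-4}--\eqref{eqn:T3 term} using the exponential tail of $w(T)$ (the BSR analogue of Lemma 6.7 of \cite{bhamidi}), giving a contribution of $O(\delta)$.

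The $(L-1)^2$ piece decomposes as in \eqref{eqn:T4 term decomposition} into $K$ contributions indexed by $a_i$-factors plus $K$ contributions indexed by $c_j$-factors. The new ingredient is that since $x_1,\ldots,x_K,x_\omega$ are real analytic at $0$ and strictly positive for $t>0$ (Theorem 2.1 of \cite{spencer}), Taylor expansion combined with \eqref{eqn:definition a}--\eqref{eqn:definition b} gives
$$a_{i,\delta}(t)\geq \max\bigl(\delta,\kappa\, t^{m_{a_i}}\bigr)\quad\text{and}\quad c_{j,\delta}(t)\geq \max\bigl(\delta,\kappa'\, t^{m_{c_j}}\bigr),\qquad t\in[0,T],$$
for suitable constants $\kappa,\kappa'>0$. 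The $a_i$-contribution reduces via $(1-\sqrt{a_i/a_{i,\delta}})^2\leq \delta^2/a_{i,\delta}^2$ and the fact that $\mu_\delta$ weights by $a_{i,\delta}(s)\,ds$ to $\delta^2\int_0^T ds/a_{i,\delta}(s)$; splitting the range at the crossover $t=(\delta/\kappa)^{1/m_{a_i}}$ yields $O(\delta^{1+1/m_{a_i}})$. For the $c_j$-contribution I would use $w(T)\geq \#\{\text{jumps}\}$ to bound the product in $L$ by $w(T)\sum_\ell \delta^2/c_{j,\delta}(\gamma_\ell)^2$, then convert the sum into the compensator integral $\int_s^T w(u)/c_{j,\delta}(u)\,du$; integrating this against $a_{i,\delta}(s)\,ds$ produces an \emph{extra} factor of $u$ (from the range $s\in[0,u]$) and the analogous two-range split improves the bound to $O(\delta^{1+2/m_{c_j}})$. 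Taking square roots and minimizing over $i,j$ delivers $T_1\leq C\delta^{\alpha'}$ for every $\alpha'\in(0,\alpha)$.

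The main obstacle is the careful bookkeeping of the singular behavior of $1/a_{i,\delta}$ and $1/c_{j,\delta}$ near $t=0$ under $\mu_\delta$: each rate function has its own order of vanishing, each contribution must be optimized separately via its own two-range split, and the asymmetric exponents $(1+1/m_{a_i})/2$ versus $(1+2/m_{c_j})/2$ emerge precisely from this interplay, matching the definition of $\alpha$ in the hypothesis. The strict inequality $\alpha'<\alpha$ absorbs the logarithmic correction that appears at the critical boundary values of the Hölder exponents.
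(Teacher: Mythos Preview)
Your strategy is the paper's strategy: bound the Hilbert--Schmidt norm of $k_{v,\delta}(x,y)(\sqrt{f(x)f(y)}-1)$, split into $a_i$-, $c_j$-, and $b$-pieces, and exploit $a_i(u)\ge C u^{m_{a_i}}$, $c_j(u)\ge C u^{m_{c_j}}$ via a two-range split. The only organizational difference is that the paper perturbs $b,a_1,\dots,a_K,c_1,\dots,c_K$ \emph{one at a time}, so that each Radon--Nikodym derivative involves a single function (their $f_1,f_2$); you perturb all $a_i,c_j$ simultaneously and get the full product formula for $d\mu/d\mu_\delta$. This is purely cosmetic.

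There is, however, a real gap in your $c_j$-step. You write ``convert the sum $\sum_\ell \delta^2/c_{j,\delta}(\gamma_\ell)^2$ into the compensator integral $\int_s^T w(u)/c_{j,\delta}(u)\,du$'', but this identity holds only after taking expectation, and at that stage the integrand still carries the \emph{random} prefactor $w_1^2(v)\,w_1(T)$ coming from the kernel bound and from your $w(T)\ge\#\{\text{jumps}\}$ step. That prefactor is correlated with the jump times $\gamma_\ell$, so you cannot simply swap the sum for its compensator. The paper deals with exactly this point: it rewrites the sum as $\sum_{p\ge 1}$ over the jump index, applies H\"older with exponents $q',q$ to separate the prefactor $w_1^3(T)\I\{w_1(T)\ge p+\ell\}$ (controlled by the exponential tail, yielding $Ce^{-C'p/q'}$) from the factor involving $\tau_p$, and then uses that the density of $\tau_p$ on the event $\{J(\tau_p)=j\}$ is bounded by $K(p+1)\,c_{j,\delta}(u)$. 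That factor of $c_{j,\delta}(u)$ is precisely your ``compensator cancellation'', and the subsequent integration $\int_0^T a_{\ell,\delta}(s_1)I_\ell\,ds_1$ produces your ``extra factor of $u$''; so your heuristic is morally right, but a H\"older/exponential-tail decoupling must be inserted to make it rigorous. Once you do this the loss in the H\"older exponent forces $\alpha'<\alpha$ rather than equality, which is what you anticipated in your last sentence.
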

The proof of Lemma \ref{lem:BSR norm difference} will be given in
Section \ref{sec:Proof of Lemma for BSR}. As in the Bohman-Frieze process,
we have $\rho_{t_c}-\rho_v\geq C(t_c-v)$ for $v<t_c$ (Lemma 5.12 in \cite{bhamidi2}).
Hence, for any $\zeta'<\zeta:=\alpha/2$, we can choose $\gamma$ close to $1/2$
and $\alpha'$ close to $\alpha$ so that $\gamma\alpha'>\zeta'$.
Since $\rho_{t_c}=1$ (Theorem 1.3 in \cite{bhamidi2}), we conclude from
Lemma \ref{lem:BSR norm difference} that
\begin{align}\label{eqn:norm left derivative}
1-\rho_{t,\del}&\geq 1-(\rho_t+Cn^{-\gamma\alpha'})\\
&=(\rho_{t_c}-\rho_t)-Cn^{-\gamma\alpha'}\geq C'(t_c-t),\nonumber
\end{align}
the last inequality holds as $t\leq t_c-n^{-\zeta'}$.
\subsection{Proof of Theorem \ref{thm:continuous time BSR}}
Let $t=t(n)$ satisfy $t\leq t_c-n^{-\zeta'}$ where $\zeta'<\zeta:=\alpha/2$.
Let us choose $\gamma$ and $\alpha'$ as described above so that \eqref{eqn:norm left derivative} holds.

Let $C_1^{RG}(t)$ be the largest component of
$RG_{n,t} (=RG_{n,t}(\{a_{i,\del}\}_{i\leq K}, b_{\del}, \{c_{i,\del}\}_{i\leq K}))$ and let
$L_1^{RG}(t):=\sum_{x\in C_1^{RG}(t)}\phi_t(x)$ be the volume
of $C_1^{RG}(t)$. From Lemma 4.4 of \cite{bhamidi2}, it follows that
\begin{equation}\label{eqn:BSR1}
\PR(L_1^{BSR}(t)\geq m)\leq \PR(L_1^{RG}(t)\geq m)+C\exp(-C'n^{1-2\gamma})
\end{equation}
for $m\geq K+1$. For $x=(s,w)\in X$, define $I(x)=s$. Let $\poi_n$ be a Poisson
process in $X$ with rate $n\mu_{\del}$ and let $N_{n,t}:=\{x\in\poi_n:\ I(x)\leq t\}$.
Then for any $A>0$
\begin{equation}\label{eqn:BSR2}
\PR(L_1^{RG}(t)\geq m)\leq\PR(L_1^{RG}(t)\geq m,N_{n,t}\leq nA)+\PR(N_{n,t}>nA).
\end{equation}
Let $I_1\leq\hdots\leq I_{N_{n,t}}$ be an ordering of elements of
$\{I(x):\ x\in\poi_n, I(x)\leq t\}$. Then
\begin{align}\label{eqn:BSR3}
&\PR(L_1^{RG}(t)\geq m,N_{n,t}\leq nA)\\
&\hskip10pt=\sum_{j=1}^{nA}\PR(L_1^{RG}(t)\geq m,\ j\leq N_{n,t}\leq nA,\ I_j=\min\{I(x):\ x\in C_1^{RG}(t)\})\nonumber\\
&\hskip10pt\leq \sum_{j=1}^{nA}\PR\left(\left\{L_1^{RG}(t)\geq m,\ I_j=\min\{I(x):\ x\in C_1^{RG}(t)\}\right\}|\ E_j\right)\nonumber
\end{align}
where $E_j=\{N_{n,t}\geq j\}$. Define the event $F_j$ as follows,
$$F_j:=\{x\notin C^{RG}(I_j,t)\text{ whenever }I(x)<I_j\}$$
where $C^{RG}(I_j,t)$ is the component in $RG_{n,t}$ containing the point
$x_j$ such that $I(x_j)=I_j$. From \eqref{eqn:BSR3}, it follows that
\begin{equation}\label{eqn:BSR4}
\PR(L_1^{RG}(t)\geq m,N_{n,t}\leq nA)\leq\sum_{j=1}^{nA}
\PR\left(\left\{\mathrm{volume}(C^{RG}(I_j,t))\geq m,\ F_j\right\}|\ E_j\right).
\end{equation}

For any $w_0\in W$ with $w_0(0)=2K$, define a branching process on $[0,t]\times W$
starting from $x_0=(0,w_0)$ exactly as in the proof of Theorem \ref{thm:continuous time BF}
using the kernel $k_{t,\del}$ and the measure $\mu_{\del}$. Let $G(x_0)$ be as in
the proof of Theorem \ref{thm:continuous time BF}.
Consider the event $E_{w_0}:=\{x_0\in\poi_n\}$. On the event $E_{w_0}$, let $C^{RG}(x_0,t)$
denote the component of $x_0$ in $RG_{n,t}$.
Then, as
in the proof of Theorem \ref{thm:continuous time BF},
\begin{align}\label{eqn:BSR5}
&\PR\left(\left\{\mathrm{volume}(C^{RG}(I_j,t))\geq m,\ F_j\right\}|\ E_j\right)\\
&\hskip10pt\leq\int_{w_0\in W}\PR(\mathrm{volume}(C^{RG}(x_0,t))\geq m| E_{w_0})\nu_{K,0;\del}(dw_0)\nonumber\\
&\hskip10pt\leq\int_{w_0\in W}\PR(G(x_0)\geq m)\nu_{K,0;\del}(dw_0).\nonumber
\end{align}
It is easy to see that the analogues of Lemma \ref{lem:2} and Lemma \ref{lem:3}
remain true in the present setup with $\Delta=1-\rho_{t,\del}$.
Hence, we can proceed as before to conclude that for some positive constant
$C_5$ independent of $w_0$,
$$\E\exp(\eta\Delta^2 G(x_0))\leq\exp(C_5\eta w_0(T))$$
whenever $\eta\leq\eta_0$, an absolute constant free of $w_0$.
Hence
\begin{align}\label{eqn:BSR6}
\int_{w_0\in W}\PR(G(x_0)\geq m)\nu_{K,0;\del}(dw_0)
&\leq\exp(-\eta\Delta^2 m)\int_{W}\exp(C_5\eta w_0(T))\nu_{K,0;\del}(dw_0)\\
&\leq C_6\exp(-\eta\Delta^2 m)\nonumber
\end{align}
for $\eta$ small enough.
Since $\Delta\geq C(t_c-t)$, we conclude from \eqref{eqn:BSR1}, \eqref{eqn:BSR2},
\eqref{eqn:BSR4}, \eqref{eqn:BSR5} and \eqref{eqn:BSR6} that
$$\PR\left(L_1^{BSR}(t)\geq\frac{\overline{C}\log n}{(t_c-t)^2}\right)\stackrel{n\to\infty}{\longrightarrow}0$$
by choosing $A$ sufficiently large and choosing
a large constant $\overline{C}$ accordingly. This completes the proof of Theorem \ref{thm:continuous time BSR}.

The proof of Corollary \ref{cor:discrete time BSR} is similar to the proof of Corollary \ref{cor:discrete time BF},
so we omit it.
\subsection{Proof of Lemma \ref{lem:BSR norm difference}}\label{sec:Proof of Lemma for BSR}
We shall change the functions $b,\{a_i\}_{i\leq K},\{c_i\}_{i\leq K}$
one at a time to go from $\rho_v$ to $\rho_{v,\del}$. Thus
$$|\rho_{v,\del}-\rho_{v}|\leq D(b,\del)+\sum_{i=1}^K D(a_i,\del)+\sum_{i=1}^K D(c_i,\del)$$
where
\begin{align*}
D(b,\del)=|\rho_v(\{a_j\}_{j\leq K},b_{\del},\{c_j\}_{j\leq K})-\rho_v|,
\end{align*}
\begin{align*}
D(a_i,\del)=&|\rho_v(\{a_{1,\del},\hdots,a_{i,\del},a_{i+1},\hdots,a_K\},b_{\del},\{c_j\}_{j\leq K})\\
&\hskip35pt -\rho_v(\{a_{1,\del},\hdots,a_{i-1,\del},a_{i},\hdots,a_K\},b_{\del},\{c_j\}_{j\leq K})|\text{ and}\\
D(c_i,\del)=&|\rho_v(\{a_{j,\del}\}_{j\leq K},b_{\del},\{c_{1,\del},\hdots,c_{i,\del},c_{i+1},\hdots,c_K\})\\
&\hskip35pt -\rho_v(\{a_{j,\del}\}_{j\leq K},b_{\del},\{c_{1,\del},\hdots,c_{i-1,\del},c_{i},\hdots,c_K\})|.
\end{align*}
As in the proof of Lemma \ref{lem:BF norm difference} (see \eqref{eqn:T2 bound}), we have
$$D(b,\del)\leq C\del.$$

It is easy to see that if $K\geq 2$, then $m_{a_K}\geq 2$ and $m_{c_K}\geq 1$
(see the discussion in Section \ref{sec:examples}). Fix $i,j\leq K$ such that
$m_{a_i}\geq 2$ and $m_{c_{j}}\geq 1$. We shall show that
$D(a_i,\del)\leq C\del^{\frac{1}{2}+\frac{1}{2m_{a_i}}}$
and $D(c_{j},\del)\leq C(\beta)\del^{\beta}$ for every $\beta<1\wedge\left(\frac{1}{2}+\frac{1}{m_{c_{j}}}\right)$
and this will suffice. (It will follow from our proof that
if $m_{a_{p}}=0$ or $1$
then $D(a_p,\del)\leq C \del{|\log\del|}^{m_{a_p}/2}$. Similarly
if $m_{c_q}=0$, then $D(c_q,\del)\leq C\del$.)

Define the measures $\sigma^{(\ell)}$ for $\ell=1,2,3,4$ on $X$ as follows,
\begin{align*}
\sigma^{(\ell)}=
\left\{
\begin{array}{l}
\mu\left(\{a_{1,\del},\hdots,a_{i-1,\del},a_{i},\hdots,a_K\},b_{\del},\{c_p\}_{p\leq K}\right), \text{ if }\ell=1,\\
\mu\left(\{a_{1,\del},\hdots,a_{i,\del},a_{i+1},\hdots,a_K\},b_{\del},\{c_p\}_{p\leq K}\right),\text{ if }\ell=2,\\
\mu\left(\{a_{p,\del}\}_{p\leq K},b_{\del},\{c_{1,\del},\hdots,c_{j-1,\del},c_{j},\hdots,c_K\}\right), \text{ if }\ell=3,\\
\mu\left(\{a_{p,\del}\}_{p\leq K},b_{\del},\{c_{1,\del},\hdots,c_{j,\del},c_{j+1},\hdots,c_K\}\right), \text{ if }\ell=4.\\
\end{array}
\right.
\end{align*}
From Lemma 5.7 of \cite{bhamidi2}, it follows that $\sigma^{(1)}<<\sigma^{(2)}$
and $\sigma^{(3)}<<\sigma^{(4)}$. Further, for $(s,w)\in X$,
$$f_1(s,w):=\frac{d\sigma^{(1)}}{d\sigma^{(2)}}(s,w)=\I\{J(s)\neq K+i\}+\I\{J(s)=K+i\}\frac{a_i(s)}{a_{i,\del}(s)}$$
and
$$f_2(s,w):=\frac{d\sigma^{(3)}}{d\sigma^{(4)}}(s,w)
=\prod_{p:J(\gamma_p(s,w))=j}\frac{c_j(\gamma_p(s,w))}{c_{j,\del}(\gamma_p(s,w))}\cdot\exp\left(\int_s^T w(u)(c_{j,\del}(u)-c_j(u))du\right)
$$
where $J(u)=J(u;w):=w(u)-w(u-)$ and $\gamma_p(s,w)$ is the time of the $p$th jump of $w$
after time $s$. Since
$\rho\left(k_{v,\del},\sigma^{(1)}\right)=\rho\left(k_{v,\del}(x,y)\sqrt{f_1(x)f_1(y)},\sigma^{(2)}\right)$,
\begin{align*}
D(a_i,\del)&=|\rho\left(k_{v,\del},\sigma^{(2)}\right)
-\rho\left(k_{v,\del}(x,y)\sqrt{f_1(x)f_1(y)},\sigma^{(2)}\right)|\\
&\leq \left[\int_X\int_X k_{v,\del}^2(x,y)
\left(\sqrt{f_1(x)f_1(y)}-1\right)^2\sigma^{(2)}(dx)\sigma^{(2)}(dy)\right]^{\frac{1}{2}}.
\end{align*}
Writing $x=(s_1,w_1)$ and $y=(s_2,w_2)$, we have $k_{v,\del}(x,y)\leq Cw_1(v)w_2(v)$. Also, we have
$(\sqrt{f_1(x)f_1(y)}-1)^2\leq 2(\sqrt{f_1(x)}-1)^2+2(\sqrt{f_1(y)}-1)^2$. These inequalities and a little
work yield
\begin{align*}
D(a_i,\del)&\leq C\left(\int_0^v a_{i,\del}(s_1)
\left(1-\sqrt{\frac{a_i(s_1)}{a_{i,\del}(s_1)}}\right)^2ds_1\right)^{\frac{1}{2}}\\
&\leq C\left[\int_0^{\del^{\frac{1}{m_{a_i}}}}
\left(\sqrt{a_{i,\del}(s_1)}-\sqrt{a_i(s_1)}\right)^2ds_1
+\int_{\del^{\frac{1}{m_{a_i}}}}^v a_{i,\del}(s_1)
\left(1-\frac{a_i(s_1)}{a_{i,\del}(s_1)}\right)^2ds_1\right]^{\frac{1}{2}}\\
&\leq C\left[\int_0^{\del^{\frac{1}{m_{a_i}}}} (\sqrt{\del})^2\ ds_1
+\del^2\int_{\del^{\frac{1}{m_{a_i}}}}^v \frac{ds_1}{a_{i,\del}(s_1)}\right]^{\frac{1}{2}}.
\end{align*}
From the definition of $m_{a_i}$ and the fact that $x_j(\cdot)$ is
bounded away from zero on $[\eps,T]$ for each $j\in\Omega_K$ and $\eps>0$ (see \cite{spencer}),
it follows that $a_{i,\del}(s_1)\geq a_i(s_1)\geq Cs_1^{m_{a_i}}$
on $[0,T]$. Hence
\begin{align*}
D(a_i,\del)\leq C\left(\del^{1+\frac{1}{m_{a_i}}}+\del^2/(\del^{\frac{1}{m_{a_i}}})^{m_{a_i}-1}\right)^{\frac{1}{2}}
=C'\del^{\frac{1}{2}+\frac{1}{2m_{a_i}}}.
\end{align*}

 A similar argument will yield,
\begin{align}\label{eqn:BSR7}
D(c_j,\del)&\leq C\left[\int_X\int_X w_1^2(v)w_2^2(v)\left(\sqrt{f_2(x)f_2(y)}-1\right)^2\sigma^{(4)}(dx)\sigma^{(4)}(dy)\right]^{\frac{1}{2}}\\
&\leq C'\left[\int_X w_1^2(v)\left(\sqrt{f_2(x)}-1\right)^2\sigma^{(4)}(dx)\right]^{\frac{1}{2}}\nonumber
\end{align}
Let
$\tau_p=\gamma_p(s_1,w_1)$. From \eqref{eqn:BSR7} and an argument similar to the
ones used in the proof of Lemma \ref{lem:BF norm difference},
we get
\begin{equation}\label{eqn:bound on D(c_j)}
D(c_j,\del)\leq C(\del^2+T_1)^{\frac{1}{2}}
\end{equation}
where,
$$T_1=\int_X w_1^2(v)\left(1-\sqrt{\prod_{p: J(\tau_p)=j}
\frac{c_j(\tau_p)}{c_{j,\del}(\tau_p)}}\right)^2\sigma^{(4)}(dx).$$

Let us define the measures $\nu^{(4)}_{\ell,s;\del}$ for $\ell=1,\hdots,K$
on $W$ such that
$\sigma^{(4)}(d(s,w))=\sum_{\ell=1}^K a_{\ell,\del}(s)ds\ \nu_{\ell,s;\del}^{(4)}(dw)$.
Also define $S(w_1):=(\#\{s\leq T : w_1(s)-w_1(s-)>0\}-1)$. Then
$$\left(1-\sqrt{\prod_{p: J(\tau_p)=j}
\frac{c_j(\tau_p)}{c_{j,\del}(\tau_p)}}\right)^2
\leq w_1(T)\sum_{p=1}^{\infty}\left(1-
\sqrt{\frac{c_j(\tau_p)}{c_{j,\del}(\tau_p)}}\right)^2 \I\{S(w_1)\geq p, J(\tau_p)=j\}
$$
and hence
\begin{equation}\label{eqn:BSR8}
T_1\leq \sum_{\ell=1}^K\int_{s_1=0}^T a_{\ell,\del}(s_1) I_{\ell}\ ds_1
\end{equation}
where
\begin{align}\label{eqn:numberit}
I_{\ell}&=\sum_{p=1}^{\infty}\int_W w_1^2(v)w_1(T)
\left(1-\sqrt{\frac{c_j(\tau_p)}{c_{j,\del}(\tau_p)}}\right)^2 \I\{S(w_1)\geq p, J(\tau_p)=j\}\
\nu^{(4)}_{\ell,s_1;\del}(dw_1)\\
&\leq \sum_{p=1}^{\infty}\E_{\nu^{(4)}_{\ell,s_1;\del}}
\left[w_1^3(T)\I\{w_1(T)\geq p+\ell\}\cdot
\left(1-\sqrt{\frac{c_j(\tau_p)}{c_{j,\del}(\tau_p)}}\right)^2 \I\{S(w_1)\geq p, J(\tau_p)=j\}\right]\nonumber\\
&\leq \sum_{p=1}^{\infty}C\exp\left(-\frac{-C'p}{2q'}\right)
\left[\E_{\nu^{(4)}_{\ell,s_1;\del}}
\left[\left(1-\sqrt{\frac{c_j(\tau_p)}{c_{j,\del}(\tau_p)}}\right)^{2q}
 \I\{S(w_1)\geq p, J(\tau_p)=j\}\right]\right]^{\frac{1}{q}}.\nonumber
\end{align}
Here $q,q'>1$ satisfy $q^{-1}+q'^{-1}=1$, later we shall specify the appropriate value of $q$.
In the last inequality
we have used the fact that $w_1$ has an exponentially decaying
tail (Lemma 5.4 of \cite{bhamidi2}).

Note that the measure
$\xi(\cdot):=\nu^{(4)}_{\ell,s_1;\del}
\{S(w_1)\geq p, J(\tau_p)=j, \tau_p\in\cdot\}$ on
$(s_1,T]$ is absolutely continuous with respect to the Lebesgue measure
($\mathfrak{Leb}$) on $(s_1,T]$ and a routine computation will show that
$\left|\frac{d\xi}{d\mathfrak{Leb}}(u)\right|\leq K(p+1)c_{j,\del}(u)$.

Let us choose $\beta\in(0,1]$ and $q>1$ so that $1<(2q\beta-1)m_{c_j}<2$.
From \eqref{eqn:numberit} and the fact that $c_{j}(u)\geq C u^{m_{c_j}}$ on $[0,T]$, we get
\begin{align*}
I_{\ell}&\leq\left(\sum_{p=1}^{\infty}C\exp\left(-\frac{-C'p}{2q'}\right)(K(p+1))^{\frac{1}{q}}\right)
\left(\int_{s_1}^T\left(1-\sqrt{\frac{c_j(u)}{c_{j,\del}(u)}}\right)^{2q}c_{j,\del}(u)\ du\right)^{\frac{1}{q}}\\
&\leq C''\left(\int_{s_1}^T\left(1-\frac{c_j(u)}{c_{j,\del}(u)}\right)^{2q\beta}c_{j,\del}(u)\ du\right)^{\frac{1}{q}}\\
&\leq C'''\del^{2\beta}\left(\int_{s_1}^T \frac{du}{u^{(2q\beta-1)m_{c_j}}}\right)^{\frac{1}{q}}\\
&\leq C''''\del^{2\beta}\cdot s_1^{(1-(2q\beta-1)m_{c_j})/q}
\leq C'''''\del^{2\beta}\cdot s_1^{1-(2q\beta-1)m_{c_j}}.
\end{align*}
From \eqref{eqn:BSR8}, we conclude that
$$T_1\leq C\del^{2\beta}\int_{0}^T \frac{ds_1}{s_1^{(2q\beta-1)m_{c_j}-1}}\leq C'\del^{2\beta},$$
the constant $C'$ is finite by the choice of $q$ and $\beta$. Since we can choose
$\beta$ arbitrarily close to $1\wedge\left(\frac{1}{2}+\frac{1}{m_{c_j}}\right)$,
the last inequality together with \eqref{eqn:bound on D(c_j)} yields the desired bound.
\subsection{Some examples}\label{sec:examples}
Let $S=\{\{1,1\},\{1,2\}\}$. Consider a bounded size rule
with $K=2$ which satisfies either\\
(I) $(2,2,\alpha,\beta)\in F$ for some $\alpha,\beta$ such that $\{\alpha,\beta\}\in S$ or\\
(II) $(\alpha',\beta',2,2)\in F^c$ for some $\alpha',\beta'$ such that $\{\alpha',\beta'\}\in S$.

It is easy to check from the system of ODEs \eqref{eqn:ODE} that $x_2'(0)=1$.
Hence $x_2(u)\approx u$ around zero. Also $\Delta((1,2,1,2),\omega)=3/2$.
Hence $x_{\omega}'(u)\geq \frac{3}{2} x_1^2(u)x_2^2(u)\geq C u^2$, which implies
that $x_{\omega}(u)\geq C'u^3$ on $[0,T]$.

By considering the tuple $(1,2,1,2)$, we get
\begin{align*}
a_1(u)\geq F_{1,2}(x_1(u),x_2(u),x_{\omega}(u))\geq \frac{1}{2}x_1^2(u)x_2^2(u)\geq Cu^2,
\end{align*}
hence $m_{a_1}\leq 2$. Under the assumptions on $F$, either
$a_2(u)=F_{2,2}(x_1(u),x_2(u),x_{\omega}(u))\geq \frac{1}{2}x_2^2(u)x_{\alpha}(u)x_{\beta}(u)$
for some $\{\alpha,\beta\}\in S$ or
$a_2(u)\geq \frac{1}{2}x_2^2(u)x_{\alpha'}(u)x_{\beta'}(u)$
for some $\{\alpha',\beta'\}\in S$.
In either case $m_{a_2}=2\text{ or }3$.
Considering the tuple $(1,\omega,1,\omega)$, we get
$$c_1(u)=F_{1,\omega}(x_1(u),x_2(u),x_{\omega}(u))/x_{\omega}(u)\geq \frac{1}{2}x_1^2(u)x_{\omega}(u)\geq Cu^3.$$
Hence $m_{c_1}\leq 3$. Finally $c_2(u)\geq \frac{1}{2}x_2^2(u)x_{\omega}(u)\geq Cu^5$,
by considering the tuple $(2,\omega,2,\omega)$. Hence $m_{c_2}\leq 5$ (note also that
$m_{c_2}\geq 1$ since $x_2(u)$ is a factor of $c_2(u)$). From these
inequalities, we see that $\zeta(F)\geq 1/3$, hence the bound on $L_1^{BSR}(t)$
and $L_1^{DBSR}(t)$ from Theorem \ref{thm:continuous time BSR} and Corollary \ref{cor:discrete time BSR}
hold all the way to the critical window.

When $K\geq 3$, $x_{K}'(0)=0$. Hence $x_K(u)\leq Cu^2$ around zero.
Since $a_K(u)=x_K^2(u)f(u)$ for some bounded function $f$, $m_{a_K}\geq 4$.
Hence $\zeta(F)\leq 1/4\cdot(1+1/m_{a_K})\leq 1/4\cdot(1+1/4)<1/3$.  So our proof does not guarantee
that the upper bound holds up to the critical window.

\section*{Acknowledgements}
I thank Joel Spencer for suggesting
this problem to me and for many helpful comments and lively discussions.
I also thank Shankar Bhamidi, Amarjit Budhiraja
and Xuan Wang for carefully reading the first draft of this paper
and for many useful comments and suggestions, particularly
for suggesting an improvement in an earlier version of
Lemma \ref{lem:BF norm difference}.

\end{document}